\newcommand{\comment}[1]{}   
\newcommand{\ignore}[1]{}   
\newcommand{\gQuote}[1]{ \selectlanguage{german}\glqq \emph{#1} \grqq %
                         \selectlanguage{english}} %
\newcommand{\czQuote}[1]{\selectlanguage{czech}\glqq \emph{#1}\grqq %
                         \selectlanguage{english}}   
\mathchardef\ordinarycolon\mathcode`\:
\numberwithin{equation}{section}
\newenvironment{myitemize}
{
  \begin{list}{$\bullet$}
  {
    \setlength{\topsep}{\smallskipamount}
    \setlength{\itemsep}{0.0em}
    \setlength{\leftmargin}{\parindent}
    \setlength{\labelwidth}{1em}
    \setlength{\labelsep}{0.5em}
  }
}
{
  \end{list}
}
\newcommand{\itemref}[1]{\noindent \eqref{#1}}
\newcommand{\myparagraph}[1]{\noindent \textbf{#1}\quad}
\newtheorem{theorem}{Theorem}[section]
\newtheorem{proposition}[theorem]{Proposition}
\newtheorem{corollary}[theorem]{Corollary}
\newcounter{intro}
\theoremstyle{definition}       
\newtheorem{definition}[theorem]{Definition}
\newtheorem{example}[theorem]{Example}
\newtheorem{examples}[theorem]{Examples}
\newtheorem{remarks}[theorem]{Remarks}
\newcommand{\Sec}[1]{Section~\ref{sec:#1}}
\newcommand{\Thm}[1]{Theorem~\ref{thm:#1}}
\newcommand{\Thms}[2]{Theorems~\ref{thm:#1} and~\ref{thm:#2}}
\newcommand{\Ex}[1]{Example~\ref{ex:#1}}
\newcommand{\Exenum}[2]{Example~\ref{ex:#1}~(\ref{#2})}
\newcommand{\Prp}[1]{Proposition~\ref{prp:#1}}
\newcommand{\Def}[1]{Definition~\ref{def:#1}}
\newcommand{\Defenum}[2]{Definition~\ref{def:#1}~(\ref{#2})}
\newcommand{\abs}[2][{}]{\lvert{#2}\rvert_{{#1}}}    
\newcommand{\abssqr}[2][{}]{\lvert{#2}\rvert^2_{#1}} 
\newcommand{\bigabs}[2][{}]{\bigl\lvert{#2}\bigr\rvert_{#1}}     
\newcommand{\bigabssqr}[2][{}]{\bigl\lvert{#2}\bigr\rvert^2_{#1}}
\newcommand{\normsymb}{\|}
\newcommand{\bignormsymb}[1]{#1\|}
\newcommand{\norm}[2][{}]{\normsymb{#2}\normsymb_{{#1}}}    
\newcommand{\normsqr}[2][{}]{\normsymb{#2}\normsymb^2_{#1}} 
\newcommand{\bignormsqr}[2][{}]{\bignormsymb{\bigl}{#2}%
                                \bignormsymb{\bigr}^2_{#1}}
\newcommand{\Bignormsqr}[2][{}]{\bignormsymb{\Bigl}{#2}%
                                \bignormsymb{\Bigr}^2_{#1}}
\newcommand{\iprod}[3][{}]{\langle{#2},{#3}\rangle_{#1}}  
\newcommand{\bigiprod}[3][{}]{\bigl\langle{#2},{#3}\bigr\rangle_{#1}}
\newcommand{\set}[2]{\{ \, #1 \, | \, #2 \, \} }      
\newcommand{\bigset}[2]{\bigl\{ \, #1 \, \bigl|\bigr. \, #2 \, \bigr\} }
\DeclareMathOperator*{\dcup}   {\mathaccent\cdot\cup}
\newcommand{\map}[3]{ #1 \colon #2 \longrightarrow #3}    
\newcommand{\bd}  {\partial}          
\newcommand{\clo}[2][]{\overline{{#2}}^{#1}} 
\newcommand{\intr}[1]{\ring{{#1}}}    
\newcommand{\restr}[1]{{\restriction}_{#1}} 
\newcommand{\card}[1]{\lvert#1\rvert}   
\newcommand{\dd}    {\, \mathrm d}    
\DeclareMathOperator{\dom}    {dom}
\DeclareMathOperator{\ran}    {ran}
\DeclareMathOperator{\id}     {id}   
\DeclareMathOperator{\dvol}    {d\, vol}
\newcommand{\specsymb} {\sigma} 
\newcommand{\spec}[2][{}]   {\specsymb_{\mathrm{#1}}(#2)}
\newcommand{\bigspec}[2][{}]   {\specsymb_{\mathrm{#1}}\bigl(#2\bigr)}
\newcommand{\eps}{\varepsilon} 
\renewcommand{\phi}{\varphi}   
\renewcommand{\rho}{\varrho}   
\newcommand{\Alpha}{\mathrm A}         
\newcommand{\conj}[1]{\overline {#1}} 
\newcommand{\R}{\mathbb{R}} 
\newcommand{\C}{\mathbb{C}} 
\newcommand{\1}{\mathbbm 1}                    
\DeclareMathSymbol{\widetildesym}{\mathord}{largesymbols}{"65}
\newcommand\lowerwidetildesym[1][-1.3ex]{%
  \text{\smash{\raisebox{#1}{%
    $\widetildesym$}}}}
\newcommand\fixwidehat[1]{%
  \mathchoice
    {\accentset{\displaystyle\lowerwidetildesym[-1.3ex]}{#1}}
    {\accentset{\textstyle\lowerwidetildesym[-1.35ex]}{#1}}
    {\accentset{\scriptstyle\lowerwidetildesym[-2.0ex]}{#1}}
    {\accentset{\scriptscriptstyle\lowerwidetildesym[-2.5ex]}{#1}}
}
\newcommand{\wt}{\fixwidehat}           
\newcommand {\qf}[1]{\mathfrak{#1}}    
\newcommand{\HS}{\mathscr H}           
\newcommand{\HSaux}{\mathscr G}        
\newcommand{\anHS}{\mathscr K}        
\newcommand{\Sobsymb} {\mathsf H}      
\newcommand{\Contsymb} {\mathsf C}     
\newcommand{\Lsymb}    {\mathsf L}     
\newcommand{\lsymb}    {\ell}          
\newcommand{\Sobspace}[1][1]{\Sobsymb^{#1}} 
\newcommand{\Contspace}[1][{}]{\Contsymb^{#1}}     
\newcommand{\Lpspace}[1][p]    {\Lsymb_{#1}}     
\newcommand{\lpspace}[1][p]    {\lsymb_{#1}}     
\newcommand{\Lsqrspace}    {\Lpspace[2]}     
\newcommand{\lsqrspace}    {\lpspace[2]}          
\newcommand{\Cont}[2][{}]{\Contspace[#1]({#2})}
\newcommand{\Lsqr}[2][{}]{\Lsqrspace^{#1}({#2})} 
\newcommand{\lsqr}[2][{}]{\lsqrspace^{#1}({#2})}   
\newcommand{\Sob}[2][1]{\Sobspace [#1]({#2})}         
\newcommand{\Dir}{{\mathrm D}}              
\newcommand{\laplacian}[2][{}]{\Delta_{{#2}}^{{#1}}} 
\newcommand{\Err}{\mathrm O}
\newcommand{\dec}{\mathrm{dec}} 
\newcommand{\intl}{\mathrm{int}} 
\newcommand{\extl}{\mathrm{ext}} 
\newcommand{\quadtext}[1]{\quad\text{#1}\quad}
\newcommand{\qquadtext}[1]{\qquad\text{#1}\qquad}
\newcommand{\HDir}{H^\Dir}   
\newcommand{\HNeu}{H}   
\newcommand{\LS}{\mathscr N}           
\newcommand{\dplus}{\mathop{\dot+}}
\newcommand{\ul}[1]{\underline {#1}}   
\newcommand{\orient}[1]{\accentset{\curvearrowright}{#1}} 
\newcommand{\orul}[1]{\orient {\underline{#1}}}
\newcommand{\SDir}{S}          
\newcommand{\eSDir}{\clo    S} 
\newcommand{\WS}{\mathscr W}          
\newcommand{\Hmax}{H^{\max}}  
\newcommand{\Hmin}{H^{\min}}   
\newcommand{\wtHNeu}{\wt H}   
\newcommand{\DtN}{Di\-ri\-chlet-to-Neu\-mann\ }   
\newcommand{\aBVP} {ab\-stract boun\-da\-ry value prob\-lem\xspace}
\newcommand{\aBVPs}{ab\-stract boun\-da\-ry value prob\-lems\xspace}
\begin{document}

\title{Abstract graph-like space and vector-valued metric graphs}

\author{Olaf Post}

\date{March 30, 2016}

\maketitle


\begin{abstract}
  In this note we present some abstract ideas how one can construct
  spaces from building blocks according to a graph.  The coupling is
  expressed via boundary pairs, and can be applied to very different
  spaces such as discrete graphs, quantum graphs or graph-like
  manifolds.  We show a spectral analysis of graph-like spaces, and
  consider as a special case vector-valued quantum graphs.  Moreover,
  we provide a prototype of a convergence theorem for shrinking
  graph-like spaces with Dirichlet boundary conditions.

  \hfill \emph{Dedicated to Pavel Exner's 70th birthday.}
\end{abstract}


\noindent \emph{Keywords:}
   abstract boundary value problems, Dirichlet-to-Neumann operator,
   graph Laplacians, coupled spaces

\paragraph{Prologue.}
I got interested in graph-like spaces by a question of Vadim
Kostrykin, asking whether a Laplacian on a family of open sets
$(X_\eps)_{\eps>0}$, converging to a metric graph $X_0$ converges to
some suitable Laplacian on $X_0$.  At that time, I was not aware of
the work of Kuchment and Zeng~\cite{kuchment-zeng:01} and wrote down
some ideas.  Somehow Pavel must have heard about this; he invited me
to visit him in \v Re\v z in October 2002, just two months after the
big flood, which covered even the high-lying tracks with water,
resulting in a very reduced schedule.  At that time one had to buy the
local ticket at \emph{Praha Masarykovo n\'adra\v z\'\i} at a counter
where one was forced to pronounce the most complicated letter in Czech
language, the \czQuote{\v R} in \czQuote{\v Re\v z}. At least I got
the ticket I wanted, and enjoyed staying in this little pension
\emph{Hudec}.  \v Re\v z at night has something very special and rare
nowadays in our noise-polluted world --- \emph{Silence!}  Only the
dogs bark and from time to time, trains pass by on the other side of
the \emph{Vltava} \dots Also \v Re\v z was a good opportunity to pick
up some Czech words, as people in that little village only spoke Czech
(and sometimes a little bit German) \czQuote{M\'ate sma\v zen\'y
  s\'yr?}  --- \czQuote{Dobrou chu\v t!}  --- \czQuote{Pivo, pros\' \i
  m} \dots This invitation was the start of a very fruitful
collaboration with Pavel over many years, resulting in several
publications~\cite{exner-post:05,exner-post:07,exner-post:09,
  exner-post:13}.  Pavel inspired my research on graph-like spaces,
resulting even in an entire book~\cite{post:12}.  Pavel sometimes cites it
with the words ``\emph{\dots and then we apply the heavy German
  machinery \dots}''

\enlargethispage*{2ex}
\emph{Dear Pavel, thank you for having supported me over all the time;
  I hope you will find
 this new piece of ``heavy German machinery''
  useful for our future collaboration, and that we can continue
  working together for a long time \dots\\ 
  \quad \hspace*{\fill} V\v sechno nejlep\v s\'\i\ k
  narozenin\'am, Pavle!}

%
\section{Introduction}
\label{sec:intro}
%

The present little note shall serve as a unified approach how to work
on spaces that can be decomposed into building blocks (the
\emph{analytic} viewpoint) or that can be built up from building
blocks (the \emph{synthetic} viewpoint) according to a graph.  We will
call such spaces \emph{(abstract) graph-like spaces}. They can be
obtained in basically two different ways, depending whether the
graph-like space is decomposed into pieces indexed by \emph{vertices}
or \emph{edges}, respectively.  We call them \emph{vertex-coupled} or
\emph{edge-coupled}, respectively.  There is also a mixed case, when
one has a decomposition into parts indexed by vertices and edges (like
for thin $\eps$-neighbourhoods of embedded graphs or graph-like
manifolds in the spirit of~\cite{post:12}).  This case can be reduced
to the vertex-coupled case by considering the \emph{subdivision graph}
as underlying graph (see~\Def{subdivision} in \Sec{graphs} for
details).

In the edge-coupled case, one can also choose a suitable subspace at
each vertex determining the vertex conditions, very much in the spirit
of a quantum graph.  Indeed, one can consider edge-coupled spaces as
\emph{general} or \emph{vector-valued quantum graphs} (see
\cite{pankrashkin:06} and also~\cite{von-below-mugnolo:13} for a
another point of view).  Explaining the concept of metric and quantum
graphs in an article dedicated to Pavel would be (in his own words
\dots) \emph{to bring owls to Athens} or \emph{coal to Newcastle} or
\emph{firewood to the forest} \dots instead we refer to the book of
Berkolaiko and Kuchment~\cite{berkolaiko-kuchment:13} or
to~\cite[Sec.~2.2]{post:12}).  We define the coupling via the language
of \aBVPs.  Such a theory has been developed mostly for operators, in
order to describe (all) self-adjoint extensions of a given minimal
operator.  As we are interested only in ``geometric'' non-negative
operators such as Laplacians we find it more suitable to start with
the corresponding quadratic or energy forms.  A theory of \aBVPs
expressed entirely in terms of quadratic forms has been developed
recently under the name \emph{boundary pairs} in~\cite{post:15}, and
under the name \emph{boundary maps} in~\cite{post:12} (see
also~\cite{post:15} and references therein for related concepts, as
well as~\cite{hdss:12}, especially Ch.~3 by Arlinski\u\i).  In
particular, one has an abstract Dirichlet and Neumann operator, a
solution operator for the Dirichlet problem and a \DtN operator, see
\Sec{prelim}.

The coupling of \aBVPs in \Sec{graph-like} is --- of course --- not
new (see e.g.~Ch.~7 in~\cite{hdss:12} and references therein).  For
our graph-like spaces, the new point is the interpretation of the
coupled operators such as the Neumann or \DtN operator as a discrete
vector-valued graph Laplacian.

In \Sec{conv-graph-like} of this note, we explain the concept of a
distance of two abstract graph-like spaces based on their building
blocks (such as the vertex or edge part of a graph-like space).  This
concept can be used to show convergence of a family of \aBVPs to a
limit one.  The motivation is to give a unified approach for the
convergence of many types of (concrete) graph-like spaces such as
thick graphs, $\eps$-neighbourhoods of embedded graphs or graph-like
manifolds, including different types of boundary conditions (Neumann,
Dirichlet).

I'd like to thank the anonymous referee for very carefully reading
this ma\-nu\-script, valuable suggestions and pointing out quite a lot
of typos. I'm afraid there are still some left \dots
%
\section{Preliminaries}
\label{sec:prelim}
%

In this section we fix the notation and collect briefly some facts on
discrete graphs, as well as on \aBVPs (boundary pairs) and convergence
of operators acting in different Hilbert spaces.
\subsection{Discrete Graphs}
\label{sec:graphs}
Let $G=(V,E,\bd)$ be a countable graph, i.e., $V$ and $E$ are disjoint
and at most countable sets and $\map \bd E {V \times V}$ is a map
defining the incidence between edges and vertices, namely, $\bd
e=(\bd_-e,\bd_+e)$ is the pair of the \emph{initial} resp.\
\emph{terminal vertex} of a given edge $e \in E$.  Let
$E(V_1,V_2):=\set{e \in E}{\bd_-e \in V_1, \bd_+e \in V_2
  \;\text{or}\; \bd_+e \in V_1, \bd_-e \in V_2}$ for $V_1,V_2 \subset
V$.  We denote by $E_v =E(\{v\},V) \subset E$ the set of edges
adjacent with the vertex $v \in V$ and call the number $\deg v :=
\card {E_v}$ the \emph{degree} of a vertex $v \in V$.  We always
assume that the graph is locally finite, i.e.., that $\deg v< \infty$
for all $v \in V$ (but not necessarily uniformly bounded).  For ease
of notation, we also assume that the graph has no loops, i.e., edges
$e$ with $\bd_-e=\bd_+e$.

We use the convention that we have chosen already an
\emph{orientation} of each edge via $\bd e=(\bd_-e,\bd_+e)$, i.e., for
each edge $e$ there is not automatically an edge in $E$ with the
opposite direction.  In particular, we assume that
\begin{equation}
  \label{eq:ed-vx}
  \sum_{v \in V} \sum_{e \in E_v} a_e(v) 
  = \sum_{e \in E} \sum_{v=\bd_\pm e} a_e(v)
\end{equation}
holds for any numbers $a_e(v) \in \C$, and this also implies that
$\sum_{v \in V} \deg v = 2 \card E$ by setting $a_e(v)=1$.  We make
constant use of this reordering in the sequel.


Given a graph $G=(V,E,\bd)$, we construct another graph by introducing
a new vertex on each edge:
\begin{definition}
  \label{def:subdivision}
  Let $G=(V,E,\bd)$ a graph.  The \emph{subdivision graph}
  $SG=(A,B,\wt \bd)$ is the graph with vertex set $A=V \dcup E$
  (disjoint union) and edge set $B=\bigcup_{v \in V}\{v\} \times E_v$.
  Moreover,
  \begin{equation*}
    \map{\wt \bd} B {A \times A}, \qquad
    b=(v,e) \mapsto 
    \begin{cases}
      (\wt \bd_-b,\wt \bd_+b) = (v,e), & v=\bd_-e\\
      (\wt \bd_-b,\wt \bd_+b) = (e,v), & v=\bd_+e.
    \end{cases}
  \end{equation*}
\end{definition}

\subsection{Boundary pairs and \aBVPs}
\label{sec:bd2}

Following a good tradition (\nolinebreak\gQuote{Was interessiert mich mein
  Geschw\"atz von gestern, nichts hindert mich, weiser zu werden
  \dots}), we use a slightly different terminology than
in~\cite{post:12,post:15}; basically, we collect \emph{all} data
involved in a boundary pair and put it into a quintuple:
\begin{definition}
  \label{def:bd2}
  \indent
  \begin{enumerate}
  \item
    We say that the quintuple $\Pi:=(\Gamma,\HSaux,\qf h, \HS^1,\HS)$
    is an \emph{\aBVP} if
    \begin{myitemize}
    \item $\qf h$ is a closed, non-negative quadratic form densely
      defined in a Hilbert space $\HS$; such a form is also called
      \emph{energy form}; we endow its domain $\dom \qf h=\HS^1$ with
      norm given by $\normsqr[\HS^1] f = \qf h(f)+\normsqr[\HS] f$; we
      also say that the energy form is given by $(\qf h,\HS^1,\HS)$;
    \item $\HSaux$ is another Hilbert space and $\map \Gamma
      {\HS^1}\HSaux$ is a bounded operator, called \emph{boundary
        map}, such that $\HSaux^{1/2}:=\ran \Gamma (=\Gamma(\HS^1))$
      is dense in $\HSaux$.
    \end{myitemize}
    
  \item If, in addition, $\HS^{1,\Dir}:=\ker \Gamma$ is dense in
    $\HS$, we say that the \aBVP $\Pi$ \emph{has a dense Dirichlet
      domain}.\footnote{A pair $(\Gamma,\HSaux)$ is called
      \emph{boundary pair associated with the quadratic form $\qf h$}
      in~\cite{post:15} if $\ran \Gamma$ is dense in $\HSaux$ and
      $\ker \Gamma$ is dense in $\HS$.  If only $\ran \Gamma$ is dense
      in $\HSaux$, then $(\Gamma,\HSaux)$ is called a
      \emph{generalised boundary pair} in~\cite{post:15}.}

  \item We say that the \aBVP $\Pi$ is \emph{bounded} if $\Gamma$ is
    surjective, i.e., if $\ran \Gamma=\HSaux$.

  \item We say that the \aBVP $\Pi$ is \emph{trivial} if $\HSaux=\HS$
    and $\Gamma=\id$.
  \end{enumerate}
\end{definition}
A typical situation is $\HS=\Lsqr {X,\mu}$ and $\HSaux = \Lsqr
{Y,\nu}$, where $(X,\mu)$ and $(Y,\nu)$ are measured spaces such that
$Y \subset X$ is measurable.  The \aBVP has a dense Dirichlet domain
iff $\mu(Y)=0$.  The \aBVP is trivial iff $(X,\mu)=(Y,\nu)$ and
$\Gamma=\id$.

Given an \aBVP, we can define the following objects (details can be
found in~\cite{post:15}):
\begin{myitemize}
\item the \emph{Neumann operator} $\HNeu$ as the operator associated
  with $\qf h$;
\item the \emph{Dirichlet operator} $\HDir$ as the operator associated
  with the closed (!) form $\qf h \restr {\ker \Gamma}$ with domain
  $\HS^{1,\Dir}:=\ker \Gamma$;
\item the \emph{space of weak solutions} $\LS^1(z)=\set{h \in
    \HS^1}{\qf h(h,f)=z\iprod h f \; \forall f \in \HS^{1,\Dir}}$;
\item for $z \notin \spec \HDir$, $\HS^1=\HS^{1,\Dir} \dplus \LS^1(z)$
  (direct sum with closed subspaces); in particular, the
  \emph{Dirichlet solution operator} $\map{\SDir(z)=(\Gamma
    \restr{\LS^1(z)})^{-1}}{\ran \Gamma=\HSaux^{1/2}}{\LS^1(z)\subset
    \HS^1}$ is defined; we also set $S:= S(-1)$, i.e., the default
  value of $z$ is $-1$.
\item for $z \notin \spec \HDir$, the \emph{\DtN (sesquilinear) form}
  $\qf l_z$ is defined via $\qf l_z(\phi,\psi)=(\qf h-z\qf
  1)(S(z)\phi,S(-1)\psi)$, $\phi,\psi \in \HSaux^{1/2}$;
\item we endow $\HS^1$ with its natural norm given by $\normsqr[\HS^1]
  f = \qf h(f) + \normsqr[\HS] f$; 

\item we endow $\HSaux^{1/2}$ with the norm given by
  $\normsqr[\HSaux^{1/2}] \phi = \qf l_{-1}(\phi)=\normsqr[\HS^1]{\SDir
  \phi}$;
\item if the \aBVP is bounded, then $\HSaux^{1/2}=\HSaux$, and the two
  norms are equivalent; moreover, $\qf l_z$ is a bounded sesquilinear
  form on $\HSaux \times \HSaux$.
\end{myitemize}

For an \aBVP, one can always construct another boundary map
$\map{\Gamma'}\WS \HSaux$ which is defined on a subspace $\WS$ of
$\HS^1 \cap \dom \Hmax$, where $\Hmax:=(\Hmin)^*$ and $\Hmin:=\HDir
\cap \HNeu$ denote the maximal resp.\ minimal operator, and on which
$\Gamma'$ is bounded.  Moreover, one has the following \emph{abstract
  Green's (first) formula}
\begin{equation}
  \label{eq:green}
  \qf h(f,g)=\iprod[\HS]{\Hmax f} g + \iprod[\HSaux] {\Gamma'f}{\Gamma g}
\end{equation}
for all $f \in \WS$ and $g \in \HS^1$.

Another property is important (see~\cite{post:15} for details):
\begin{definition}
  \label{def:ell.reg}
  We say that an \aBVP $\Pi$ (or the boundary pair $(\Gamma,\HSaux)$)
  is \emph{elliptically regular} if the associated Dirichlet solution
  operator $\map{\SDir:=\SDir(-1)}{\HSaux^{1/2}}{\HS^1}$ extends to a
  bounded operator $\map \eSDir \HSaux \HS$, or equivalently, if there
  exists a constant $c>0$ such that $\norm[\HS]{\SDir \phi} \le c
  \norm[\HSaux] \phi$ for all $\phi \in \HSaux^{1/2}$.
\end{definition}

All our \aBVPs treated in this note will be elliptically regular.
They have the important property that the \DtN form $\qf l_z$ is
\emph{closed} as form in $\HSaux$ with domain $\dom \qf
l_z=\HSaux^{1/2}=\ran \Gamma$, and hence is associated with a closed
operator $\Lambda(z)$, called \emph{\DtN operator}; moreover, the
domain $\HSaux^1:=\dom \Lambda(z)$ of $\Lambda(z)$ is independent of
$z \in \C \setminus \spec \HDir$.  Another important consequence is
the following formula on the difference of resolvents: Let $z \in \C
\setminus (\spec \HNeu \cup \spec \HDir)$, then
\begin{equation}
  \label{eq:krein-res}
  (\HNeu-z)^{-1} = (\HDir-z)^{-1} + \eSDir(z)\Lambda(z)^{-1}\eSDir(\conj z)^*.
\end{equation}
As a consequence of~\eqref{eq:krein-res}, one has e.g.\ the spectral
characterisation
\begin{equation}
  \label{eq:spec-rel}
  \lambda \in \spec \HNeu \quad\iff\quad
  0 \in \spec {\Lambda(\lambda)}
\end{equation}
for all $\lambda \in \R \setminus \spec \HDir$.

\begin{examples}
  \label{ex:abvp}
  \sloppy Important examples of elliptically regular \aBVPs are the
  following:
  \begin{enumerate}
  \item
    \label{abvp.mfd}
    Let $(X,g)$ be a Riemannian manifold with compact smooth
    boundary $(Y,h)$, then
    \begin{equation*}
      \Pi=\bigl(\Gamma,\Lsqr{Y,h}, \qf h,\Sob {X,g}, \Lsqr{X,g}\bigr)
    \end{equation*}
    is an elliptically regular \aBVP with dense Dirichlet domain.
    Here, $\Gamma f = f\restr Y$ is the Sobolev trace, and the energy
    form is $\qf h(f)=\int_X \abssqr[g]{d f} \dvol_g$.

    This example is actually the godfather of the above-mentioned
    names for the derived objects: e.g.\ the Dirichlet resp.\ Neumann
    operators are actually the Dirichlet and Neumann Laplacians, the
    Dirichlet solution operator is the operator solving the Dirichlet
    problem (also called \emph{Poisson operator}), the abstract
    Green's formula~\eqref{eq:green} is the usual one with $\Gamma' f$
    being the normal outwards derivative and $\WS=\Sob[2] X$ e.g., and
    the \DtN operator has its standard interpretation.

  \item Bounded \aBVPs (i.e., \aBVPs, where $\ran \Gamma=\HSaux$, or
    equivalently, where the \DtN operator is bounded), and in
    particular \aBVPs with finite dimensional boundary space $\HSaux$,
    are elliptically regular.

  \item
    \label{abvp-graph}
    Let $G=(V,E,\bd)$ be a graph.  For simplicity, we consider only
    the normalised Laplacian here.  We define an energy form via
    \begin{equation*}
      \qf h(f)=\sum_{e \in E}\abssqr{f(\bd_+e)-f(\bd_-e)}
    \end{equation*}
    for $f \in \HS^1=\HS=\lsqr{V,\deg}$, where
    $\normsqr[\lsqr{V,\deg}] f = \sum_{v \in V} \abssqr{f(v)}\deg v$.
    Using~\eqref{eq:ed-vx} it is not hard to see that $0 \le \qf h(f)
    \le 2\normsqr[\lsqr{V,\deg}] f$.  The \emph{boundary} of $G$ is
    just an arbitrary non-empty subset $\bd V$ of $V$ (in particular,
    the degree of a ``boundary vertex'' can be arbitrary).  Set
    $\HSaux=\lsqr{\bd V,\deg}$ and $\Gamma f = f \restr {\bd V}$.
    Then $\Pi=(\Gamma,\lsqr{\bd V,\deg},\qf
    h,\lsqr{V,\deg},\lsqr{V,\deg})$ is an elliptically regular \aBVP
    without dense Dirichlet domain (see~\cite[Sec.~6.7]{post:15}).

    The Neumann operator acts as
      \begin{equation}
        \label{eq:normalised.lapl}
        (\HNeu f)(v)
        = (\laplacian G f)(v)
       := \frac1{\deg v} \sum_{e \in E_v} \bigl(f(v)-f(v_e)\bigr)
      \end{equation}
      for $v \in V$, where $v_e$ denotes the vertex adjacent with $e$
      and opposite to $v$.  The Dirichlet operator acts in the same
      way on $\lsqr{\intr V,\deg}$ where $\intr V := V
      \setminus \bd V$ are the \emph{interior} vertices (note that the
      Dirichlet Laplacian is not the Laplacian on the subgraph $\intr
      G := (\intr V,\intr E,\intr \bd)$ with $\intr E :=
      E(\intr V,\intr V)$ and $\intr \bd := \bd \restr{\intr
        E}$, as the degree is still calculated in the entire graph $G$
      and not in $\intr G$).

      Moreover, the decomposition $\HS=\lsqr {V,\deg} = \lsqr {\bd
        V,\deg} \oplus \lsqr {\intr V,\deg}=\HSaux \oplus \ker \Gamma$
      yields a block structure for $H$, namely,
      \begin{equation*}
        H = \begin{pmatrix}
             A & B\\ B^* & D
            \end{pmatrix}
      \end{equation*}
      with $\map A \HSaux \HSaux$, $\map B{\ker \Gamma}\HSaux$ and
      Dirichlet operator $\map {D=\HDir}{\ker \Gamma}{\ker \Gamma}$.
      The \DtN operator is
      \begin{equation*}
        \Lambda(z)
        = (A-z) - B (D-z)^{-1} B^*
      \end{equation*}
      provided $z \notin \spec \HDir=\spec D$.  Moreover, the second
      boundary map $\map{\Gamma'}{\WS=\lsqr{V,\deg}}{\HSaux=\lsqr{\bd
          V,\deg}}$ in Green's formula~\eqref{eq:green} is here
      \begin{equation*}
        (\Gamma'f)(v)
        = \frac 1{\deg v} \sum_{e \in E_v} \bigl(f(v)-f(v_e)\bigr),
        \qquad v \in \bd V,
      \end{equation*}
      for $f \in \WS=\lsqr{V,\deg}$, or in block structure,
      $\Gamma'=(A, B)$.

      Note that we have not excluded the extreme (or trivial) case
      $\bd V=V$ leading to a trivial \aBVP with $\Gamma=\id_{\lsqr
        V}$.  In this case, $\ker \Gamma=\{0\}$, hence $A=H$, $B=0$,
      $\HDir=D=0$ and $\spec \HDir=\emptyset$.  Moreover,
      $\Lambda(z)=H-z$.

  \item Let $X$ be a metric graph (with underlying discrete graph
    $G=(V,E,\bd)$ and edge length function $\map \ell E {(0,\infty)}$,
    $e \mapsto \ell_e$, (see~e.g.~\cite{berkolaiko-kuchment:13}
    or~\cite[Sec.~2.2]{post:12}) such that $\ell_0 = \inf_{e \in
      E}\ell_e>0$.  A bounded (hence elliptically regular) \aBVP is
    given by $\Pi=(\Gamma,\lsqr {V,\deg},\qf h,\Sob X,\Lsqr X)$, where
    $\Gamma f = f \restr V$ is the restriction of functions on $X$ to
    the set of vertices, $\qf h(f)=\int_X \abssqr{f'(x)} \dd x=\sum_{e
      \in E}\int_0^{\ell_e} \abssqr{f_e'(x_e)} \dd x_e$ and $f \in
    \Sob X=\bigoplus_{e \in E} \Sob{[0,\ell_e]} \cap \Cont X$.  In
    this case, the Neumann operator $\HNeu$ is the Laplacian with
    \emph{standard} or \emph{(generalised) Neumann} or
    \emph{Kirchhoff}\footnote{When one calls these vertex conditions
      ``\emph{Kirchhoff}'' as a coauthor of Pavel, one always ends up
      with at least a footnote (as in my first collaboration with
      Pavel~\cite{exner-post:05}).  For Pavel, the current
      conservation usually associated with this name, refers to the
      \emph{probability} current, which is preserved for any
      self-adjoint vertex condition.  Many other authors think of a
      more naive current, defined by a derivative considered as vector
      field.}  \emph{vertex conditions} and the Dirichlet operator
    $\HDir$ is the direct sum of the Dirichlet Laplacians on the
    intervals $[0,\ell_e]$, hence decoupled
    (see~\cite{post.in:08,post:12} for details).
  \end{enumerate}
\end{examples}

\subsection{Convergence of \aBVPs acting in different spaces}
\label{sec:conv-bd2}

We now define a concept of a ``distance'' $\delta$ for objects of
\aBVPs $\Pi$ and $\wt \Pi$ acting in different spaces.  One can think
of $\wt \Pi$ as being a perturbation of $\Pi$, and $\delta$ measures
quantitatively, how far away $\wt\Pi$ is from being isomorphic with
$\Pi$ (see \Ex{0-q-u-e} below for the case $\delta=0$).  The term
``convergence'' refers to the situation where we consider a family
$(\Pi_\eps)_{\eps \ge 0}$ of \aBVPs; one can think of $\wt
\Pi=\Pi_\eps$ and $\Pi=\Pi_0$ with ``distance'' $\delta_\eps$.  If
$\delta_\eps \to 0$ as $\eps \to 0$ then we say that \emph{$\Pi_\eps$
  converges to $\Pi_0$}.  Details of this concept of a ``distance''
between operators acting in different spaces can also be found
in~\cite[Ch.~4]{post:12}.

To be more precise, let $\Pi=(\Gamma,\HSaux,\qf h,\HS^1,\HS)$ and $\wt
\Pi=(\wt \Gamma,\wt \HSaux,\wt {\qf h},\wt \HS^1,\wt \HS)$ be two
\aBVPs.  Recall that $\HS^1$ is the domain of a closed non-negative
form $\qf h$ in the Hilbert space $\HS$, and that $\map \Gamma
{\HS^1}\HSaux$ is bounded with dense range, and similarly for the
tilded objects.  We need bounded operators
\begin{subequations}
  \label{eq:id-ops}
  \begin{equation}
    \label{eq:id-ops.hs}
    \map J \HS {\wt \HS}, \quad
    \map {J'}{\wt \HS} \HS, \quad
    \map I \HSaux {\wt \HSaux}  \quadtext{and}
    \map {I'} {\wt \HSaux} \HSaux,
  \end{equation}
  called \emph{identification operators} which replace unitary or
  isomorphic operators.  The quantity $\delta>0$ used later on
  measures how far these operators differ from isomorphisms.  We also
  need \emph{identification operators} on the level of the energy form
  domains, namely
  \begin{equation}
    \label{eq:id-ops-forms}
    \map {J^1}{\HS^1} {\wt \HS^1}   \quadtext{and}
    \map {J^{\prime 1}}{\wt \HS^1} {\HS^1}.
  \end{equation}
\end{subequations}
In contrast to~\cite{behrndt-post:pre16} we will not assume in this
note that the identification operators $I$ and $I'$ on the boundary
spaces $\HSaux$ and $\wt \HSaux$ also respect the form domains
$\HSaux^{1/2}$ and $\wt \HSaux^{1/2}$ of the \DtN operators.

We start with the energy forms and boundary maps:
\begin{definition}
  \label{def:forms.close}
  Let $\delta>0$.  We say that the energy forms $\qf h$ and
  $\wt{\qf h}$ are \emph{$\delta$-close} if there are identification
  operators $J^1$ and $J^{\prime 1} $ as in~\eqref{eq:id-ops} such
  that
    \begin{gather*}
      \bigabs{\wt{\qf h}(J^1 f, u) - \qf h(f, J^{\prime 1}u)}
      \le \delta \norm[\wt \HS^ 1] u \norm[\HS^1] f
    \end{gather*}
    holds for all $f \in \HS^1$ and $u \in \wt\HS^1$.
\end{definition}

\begin{definition}
  \label{def:bd-maps.close}
  Let $\delta>0$.  We say that the boundary maps $\Gamma$ and $\wt
  \Gamma$ are \emph{$\delta$-close} if there exist identification
  operators $J^1$, $J^{\prime 1}$, $I$ and $I'$ as
  in~\eqref{eq:id-ops} such that
  \begin{equation*}
    \norm[\wt \HSaux]{(I \Gamma - \wt \Gamma J^1)f} 
    \le \delta\norm[\HS^1] f
    \qquadtext{and}
    \norm[\HSaux]{(I'\wt \Gamma - \Gamma J^{\prime 1})u} 
    \le \delta\norm[\wt \HS^1] u
  \end{equation*}
  hold for all $f \in \HS^1$ and $u \in \wt\HS^1$.
\end{definition}

So far, we have only dealt with forms and their domains.  Let us now
define the following compatibility between the identification
operators on the Hilbert space and the energy form level:
\begin{definition}
  \label{def:id-ops-q-u-e}
  We say that the identification operators $J$, $J'$, $J^1$ and
  $J^{\prime 1}$ 
  are
  \emph{$\delta$-quasi-unitarily equivalent} with respect to the
  energy forms $\qf h$ and $\wt{\qf h}$ if
  \begin{gather*}
    \abs{\iprod[\wt \HS]{J f} u - \iprod[\HS] f
      {J'u}} \le \delta \norm[\HS] f \norm[\wt\HS] u,\\
    \norm[\HS]{f-J'Jf} \le \delta \norm[\HS^1] f, \qquad
    \norm[\wt\HS]{u-JJ'u} \le \delta \norm[\wt \HS^1] u,  \\
    \norm[\wt \HS]{J^1 f - Jf} \le \delta \norm[\HS^1] f 
    \quadtext{and}
    \norm[\HS]{J^{\prime 1} u - J'u} \le \delta \norm[\wt \HS^1] u 
  \end{gather*}
  hold for $f$ and $u$ in the respective spaces.  We say that the
  forms $\qf h$ and $\wt{\qf h}$ are \emph{$\delta$-quasi-unitarily
    equivalent}, if they are $\delta$-close with
  $\delta$-quasi-unitarily equivalent identification operators.
\end{definition}
For the boundary identification operators $I$ and $I'$ we define:
\begin{definition}
  \label{def:bd-id-ops-q-u-e}
  We say that the identification operators $I$ and $I'$ are
  \emph{$\delta$-quasi-isomorphic} with respect to the \aBVPs $\Pi$
  and $\wt \Pi$ if
  \begin{equation*}
    \norm[\HS]{\phi-I'I\phi} \le \delta \norm[\HSaux^{1/2}] \phi, \qquad
    \norm[\wt\HS]{\psi-II'\psi} \le \delta \norm[\wt \HSaux^{1/2}] \psi
  \end{equation*}
  hold for $\phi\in \HSaux^{1/2}$ and $\psi \in \wt\HSaux^{1/2}$.  We
  say that the boundary maps $\Gamma$ and $\wt \Gamma$ are
  $\delta$-quasi-isomorphic if they are $\delta$-close with
  $\delta$-quasi-unitarily equivalent $J$, $J'$, $J^1$ and $J^{\prime
    1}$ resp.\ $\delta$-quasi-isomorphic $I$ and $I'$.
\end{definition}
The $\delta$-quasi-isomorphy only refers to the \DtN form $\qf l_{-1}$
in $z=-1$ as $\normsqr[\HSaux^{1/2}]\phi = \qf
l(\phi)=\normsqr[\HS^1]{\SDir(-1)\phi}$ and no other structure of
$\Pi$; a similar note holds for $\wt \Pi$.  We do not assume that
$I^*$ is closed to $I'$, as this is too restrictive for
\Def{abvps.close} (see e.g.\ the proof of \Prp{triv.conv}: $I^*=I'$
would mean $\gamma=1$).

Finally, we define what it means for \aBVPs to be ``close'' to each
other, by combining the last four definitions:
\begin{definition}
  \label{def:abvps.close}
  Let $\delta>0$.  We say that the \aBVPs $\Pi$ and $\wt \Pi$ are
  \emph{$\delta$-quasi-isomorphic} if there exist
  $\delta$-quasi-unitarily equivalent identification operators $J$,
  $J'$, $J^1$ and $J^{\prime 1}$ and $\delta$-quasi-isomorphic
  identification operators $I$ and $I'$ for which $\qf h$ and $\wt{\qf
    h}$, respectively, $\Gamma$ and $\wt \Gamma$ are $\delta$-close.
\end{definition}

Let us illustrate this concept in two examples.

\begin{example}
  \label{ex:0-q-u-e}
  A good test for a reasonable definition of a ``distance'' is the
  case $\delta=0$: if $\Pi$ and $\wt \Pi$ are $0$-quasi-isomorphic
  then $J$ is unitary with adjoint $J'$; $J^1$ and $J^{\prime 1}$ are
  restrictions of $J$ and $J^*$, respectively.  Moreover, $J$
  intertwines $H$ and $\wt H$ in the sense that
  $J(\HNeu+1)^{-1}=(\wtHNeu+1)^{-1}J$; and $I$ is a bi-continuous
  isomorphism with inverse $I'$, and $\Gamma$ and $\wt \Gamma$ are
  equivalent in the sense that $\wt \Gamma=I\Gamma J^{\prime 1}$.  We
  call such \aBVPs \emph{isomorphic}.
\end{example}

Another rather trivial case is the following: it nevertheless plays an
important role in the study of shrinking domains like an
$\eps$-homothetic vertex neighbourhood shrinking to a point in the
limit $\eps \to 0$ (i.e., we use the \aBVP $\wt \Pi=\Pi_\eps$
associated with a compact and connected manifold $X$ of dimension $d
\ge 2$ with boundary $Y=\bd X$ and metric $\eps^2 g$ as in
\Exenum{abvp}{abvp.mfd}; in this case, $\delta=\Err(\sqrt \eps)$,
see~\cite[Sec.~5.1.4]{post:12} for details, also for the validity
of~\eqref{eq:bd.map.estimate}):
\begin{proposition}
  \label{prp:triv.conv}
  Assume that $\wt \Pi=(\wt \Gamma,\wt\HSaux,\wt{\qf h},\wt \HS^1,\wt
  \HS)$ is an \aBVP such that the corresponding Neumann operator
  $\wtHNeu$ has $0$ as simple and isolated eigenvalue in its spectrum.
  Assume also that there is $a
  \in (0,1]$ such that
  \begin{equation}
    \label{eq:bd.map.estimate}
    \normsqr[\wt\HSaux]{\wt \Gamma u}
    \le a \wt{\qf h}(u) 
      + \frac 2 a \normsqr[\wt \HS] u
  \end{equation}
  holds for all $u \in \wt \HS^1$ .

  Moreover, let $\Pi=(\id,\C,0,\C,\C)$ be a trivial \aBVP.  Then $\wt
  \Pi$ and $\Pi$ are $\delta$-quasi-isomorphic with $\delta$ depending
  only on parameters of $\wt \Pi$ and $a$,
  see~\eqref{eq:triv.conv.delta} for a precise definition.
\end{proposition}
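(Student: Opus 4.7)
The plan is to build all six identification operators out of the normalized ground state $\Phi$ of $\wtHNeu$ and to verify each closeness condition using only the spectral gap at $0$ and the trace inequality \eqref{eq:bd.map.estimate}. Let $\Phi\in\wt\HS^1$ satisfy $\wtHNeu\Phi=0$ and $\norm[\wt\HS]\Phi=1$ (hence $\wt{\qf h}(\Phi,\cdot)=0$); let $\lambda_1>0$ be the next point of $\spec{\wtHNeu}$ (strictly positive by the isolation hypothesis) and set $\gamma:=\norm[\wt\HSaux]{\wt\Gamma\Phi}$, which we may assume positive (otherwise $\Phi$ lies in the Dirichlet domain and a simpler argument applies). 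Put
\[
  J=J^1:z\mapsto z\Phi,\qquad
  J'=J^{\prime 1}:u\mapsto\iprod[\wt\HS]\Phi u,
\]
\[
  I:z\mapsto z\wt\Gamma\Phi,\qquad
  I':\psi\mapsto \gamma^{-2}\iprod[\wt\HSaux]{\wt\Gamma\Phi}\psi.
\]

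With these choices most of the closeness requirements hold with $\delta=0$: $J$ is isometric with $J'J=\id_\C$, $J^1=J$, $J^{\prime 1}=J'|_{\wt\HS^1}$; the pairing $\iprod[\wt\HS]{Jf}u-\iprod[\HS]f{J'u}$ vanishes identically; the energy forms satisfy $\wt{\qf h}(J^1f,u)=f\,\wt{\qf h}(\Phi,u)=0=\qf h(f,J^{\prime 1}u)$ since $\qf h=0$; and $I\Gamma f-\wt\Gamma J^1f=0$, $I'I=\id_\C$. The three surviving estimates are all driven by one computation: for $u\in\wt\HS^1$ decompose $u=(J'u)\Phi+u^\perp$ with $u^\perp\perp_{\wt\HS}\Phi$; then $\wt{\qf h}(u^\perp)=\wt{\qf h}(u)$, the spectral gap gives $\normsqr[\wt\HS]{u^\perp}\le\lambda_1^{-1}\wt{\qf h}(u)$, and inserting this into \eqref{eq:bd.map.estimate} applied to $u^\perp$ yields the key trace bound
\begin{equation}
  \label{eq:triv.conv.trace}
  \normsqr[\wt\HSaux]{\wt\Gamma u^\perp}
  \le \Bigl(a+\frac{2}{a\lambda_1}\Bigr)\wt{\qf h}(u)
  \le \Bigl(a+\frac{2}{a\lambda_1}\Bigr)\normsqr[\wt\HS^1]u.
\end{equation}
From \eqref{eq:triv.conv.trace} one immediately reads $\norm[\wt\HS]{u-JJ'u}=\norm[\wt\HS]{u^\perp}\le\lambda_1^{-1/2}\norm[\wt\HS^1]u$, and Cauchy--Schwarz applied to $I'\wt\Gamma u-\Gamma J^{\prime 1}u=\gamma^{-2}\iprod[\wt\HSaux]{\wt\Gamma\Phi}{\wt\Gamma u^\perp}$ produces the estimate with constant $\gamma^{-1}\sqrt{a+2/(a\lambda_1)}$ required by \Def{bd-maps.close}.

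The genuinely delicate estimate is that of $\norm[\wt\HSaux]{\psi-II'\psi}$ in \Def{bd-id-ops-q-u-e}: $II'$ has rank one while $\wt\HSaux$ is generically infinite-dimensional, so no bound in the plain $\wt\HSaux$-norm of $\psi$ can hold. This is precisely why that definition measures $\psi$ in the form norm $\norm[\wt\HSaux^{1/2}]\psi=\norm[\wt\HS^1]{\wt S\psi}$. Writing $\psi=\wt\Gamma h$ with $h=\wt S\psi$ (so $\norm[\wt\HS^1]h=\norm[\wt\HSaux^{1/2}]\psi$) and splitting $h=(J'h)\Phi+h^\perp$, the identity $II'(\wt\Gamma\Phi)=\wt\Gamma\Phi$ collapses $\psi-II'\psi$ to the $\wt\HSaux$-projection of $\wt\Gamma h^\perp$ onto $(\wt\Gamma\Phi)^\perp$, which \eqref{eq:triv.conv.trace} applied to $h$ controls by $\sqrt{a+2/(a\lambda_1)}\,\norm[\wt\HSaux^{1/2}]\psi$. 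Collecting the three nontrivial constants yields \Def{abvps.close} with
\begin{equation}
  \label{eq:triv.conv.delta}
  \delta:=\max\Bigl\{\lambda_1^{-1/2},\;\sqrt{a+\tfrac{2}{a\lambda_1}},\;\gamma^{-1}\sqrt{a+\tfrac{2}{a\lambda_1}}\Bigr\},
\end{equation}
which depends only on $a$ and the parameters $\lambda_1,\gamma$ of $\wt\Pi$, as required.
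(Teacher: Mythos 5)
Your proof is correct, and its skeleton is the paper's: the identification operators are identical (up to the inner-product convention and writing $\gamma$ for $\norm[\wt\HSaux]{\wt\Gamma\Phi}$ rather than its inverse square), most closeness conditions are observed to hold exactly, and the trace bound $\normsqr[\wt\HSaux]{\wt\Gamma u^\perp}\le\bigl(a+\tfrac{2}{a\lambda_1}\bigr)\wt{\qf h}(u)$ is exactly the paper's central estimate for both $u-JJ'u$ and $I'\wt\Gamma-\Gamma J^{\prime 1}$. The one step where you genuinely diverge is the quasi-isomorphy estimate for $\norm[\wt\HSaux]{\psi-II'\psi}$. The paper argues through the \DtN operator at $z=0$: citing \cite{post:15}, $\wt\Gamma\Phi$ is a $0$-eigenvector of $\wt\Lambda(0)$, so the spectral gap $\mu_1$ of $\wt\Lambda(0)$ at $0$ gives $\normsqr[\wt\HSaux]{\psi-II'\psi}\le\mu_1^{-1}\wt{\qf l}_0(\psi)$, and the monotonicity of $z\mapsto\wt{\qf l}_z$ (again from \cite{post:15}) converts $\wt{\qf l}_0$ into the $\wt\HSaux^{1/2}$-norm, yielding the constant $1/\sqrt{\mu_1}$. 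You instead pull $\psi$ back through the Dirichlet solution operator, $h=\wt S\psi$, split $h$ along $\Phi$, note that $\id-II'$ is the orthogonal projection onto $(\wt\Gamma\Phi)^\perp$ (so the $\wt\Gamma\Phi$-component of $\psi$ is annihilated exactly), and control the remainder $\wt\Gamma h^\perp$ by the same trace bound, getting $\sqrt{a+2/(a\lambda_1)}$ in place of $1/\sqrt{\mu_1}$. Both arguments are sound, and the resulting $\delta$'s agree except in this first constant. Your variant is more self-contained: it uses no properties of $\wt\Lambda(0)$ whatsoever --- in particular it needs neither $0\notin\spec{\wtHDir}$ (required for $\wt\Lambda(0)$ to exist) nor the monotonicity theorem --- and it expresses $\delta$ solely through the Neumann gap $\lambda_1$, $\gamma$ and $a$. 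The paper's variant buys a constant phrased in terms of the boundary spectral gap $\mu_1$, a natural quantity of the boundary problem which can be sharper than your trace-based constant. One caveat: your parenthetical that a ``simpler argument applies'' when $\wt\Gamma\Phi=0$ is not right --- with $J^{\prime 1}u=\iprod[\wt\HS]{\Phi}{u}$, testing the second estimate of \Def{bd-maps.close} on $u=\Phi$ forces $\delta\ge 1$ no matter how $I'$ is chosen, so no useful statement survives; but this degenerate case is equally outside the paper's proposition, whose $\delta$ likewise contains the factor $1/\norm[\wt\HSaux]{\wt\Gamma\Phi_0}$.
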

\begin{proof}
  Let $\Phi_0$ be a normalised eigenvector associated with the
  eigenvalue $0$ of $\wt H$.  As $0 \in \spec {\wt H}$, we also have
  $0 \in \spec{\wt \Lambda(0)}$ with eigenvector $\Psi_0 = \wt \Gamma
  \Phi_0$ (see \cite[Thm.~4.7~(i)]{post:15}).  In particular, $\gamma
  := \norm[\wt \HSaux]{\wt \Gamma \Phi_0}^{-2}$ is defined.  For the
  identification operators, we set
  \begin{equation*}
    Jf = f\Phi_0, \quad 
    J^1f=Jf, \quad 
    J'u=J^*u=\iprod[\wt \HS] u {\Phi_0},\quad 
    J^{\prime 1} u=J^*u, \quad
    I\phi= \phi \Psi_0
  \end{equation*}
  and $I'=\gamma I^*$, where $I^* \psi=\iprod[\wt\HSaux] \psi
  {\Psi_0}$.  The choice of $\gamma$ implies that $I'I\phi=\phi$, and
  \begin{equation*}
    \normsqr[\wt \HSaux]{\psi - II'\psi}
    =\bignormsqr[\wt \HSaux]
       {\psi - \gamma \iprod[\wt \HSaux] \psi{\Psi_0}\Psi_0}
    \le \frac 1 {\mu_1} \wt{\qf l}_0(\psi)
  \end{equation*}
  as $\sqrt \gamma \Psi_0$ is a normalised eigenfunction of $\wt
  \Lambda(0)$ corresponding to the eigenvalue $0$, where $\mu_1:=
  d(\spec{\wt \Lambda(0)} \setminus\{0\},0)$ and $\wt{\qf l}_0$ is the
  associated quadratic form.  As $\lambda \mapsto \wt{\qf l}_\lambda$
  is monotonously decreasing (see~\cite[Thm.~2.12(v)]{post:15}), we
  have the estimate $\wt{\qf l}_0(\psi)\le \wt{\qf l}_{-1}(\psi)=:
  \normsqr[\wt \HSaux^{1/2}] \psi$.  In particular, $I$ and $I'$ are
  $(1/\sqrt{\mu_1})$-quasi-isomorphic, see \Def{bd-id-ops-q-u-e}.

  For the $\delta$-closeness of the forms resp.\ the
  boundary maps we have
  \begin{gather*}
    \wt{\qf h}(J^1 f, u) - \qf h(f, J^{\prime 1}u)=0,\\
    (I \Gamma - \wt \Gamma J^1)f
    =I f-\wt \Gamma f\Phi_0
    =f\cdot (\Psi_0 - \wt \Gamma \Phi_0)=0
    \quadtext{and}\\
    \begin{split}
      (I'\wt \Gamma - \Gamma J^{\prime 1})u 
      &=\iprod[\wt\HSaux]{\gamma \wt \Gamma u}{\Psi_0}
           - \iprod[\wt\HS] u {\Phi_0}\\
      &=\iprod[\wt\HSaux]{\gamma \wt \Gamma u}{\Psi_0}
           - \gamma \iprod[\wt\HSaux]{\Psi_0}{\Psi_0}
                \iprod[\wt\HS] u {\Phi_0}\\
      &=\gamma \bigiprod[\wt\HSaux]{
        \wt \Gamma 
          \bigl(u -\iprod[\wt \HS] u{\Phi_0}\Phi_0
        \bigr)}{\Psi_0}.
    \end{split}
  \end{gather*}
  The latter inner product can be estimated in squared absolute value
  by
  \begin{align*}
    \abssqr{(I'\wt \Gamma - \Gamma J^{\prime 1})u}
    &\le \gamma \bignormsqr[\wt \HSaux]
      {\wt \Gamma(u -\iprod[\wt \HS] u{\Phi_0}\Phi_0)}\\
    &\le \gamma 
      \Bigl(a \wt{\qf h}(u) 
         + \frac 2a \bignormsqr[\wt \HS] {u -\iprod[\wt \HS] u{\Phi_0}\Phi_0}
      \Bigr)\\
    &\le \gamma \Bigl(a + \frac 2{a \lambda_1}\Bigr) \wt {\qf h}(u),
  \end{align*}
  using~\eqref{eq:bd.map.estimate}, where $\lambda_1:= d(\spec{\wt H}
  \setminus\{0\},0)$.  Note that $\wt {\qf h}\bigl(u -\iprod[\wt \HS]
  u{\Phi_0}\Phi_0\bigr)=\wt {\qf h}(u)$ as $\wt H\Phi_0=0$ and hence
  $\wt {\qf h}(w,\Phi_0)=\iprod w {\wt H\Phi_0}=0$ for any $w \in \wt
  \HS^1$.

  Finally, $J^*Jf=f$ and $\normsqr[\wt \HS]{u-JJ^*u}=\normsqr[\wt
    \HS]{u-\iprod[\wt \HS] u {\Phi_0} {\Phi_0}} \le \frac 1{\lambda_1}
  \wt{\qf h}(u)$.  Therefore we can choose
  \begin{equation}
    \label{eq:triv.conv.delta}
      \delta
      = \max \Big\{
        \frac 1{\sqrt{\mu_1}},
        \frac 1{\norm[\wt \HSaux]{\wt \Gamma \Phi_0}}
          \sqrt{a + \frac 2{a \lambda_1}},
        \frac 1{\sqrt{\lambda_1}}
      \Bigr\}.
        \qedhere
  \end{equation}
\end{proof}

%
\section{Abstract graph-like spaces}
\label{sec:graph-like}
%

Let us first explain the philosophy briefly.  In the below-mentioned
different couplings of \aBVPs according to a graph, we show that the
Neumann operator is coupled, while the Dirichlet operator is always a
direct sum of the building blocks, i.e., decoupled.  Moreover, we give
formulas how the coupled operators can be calculated from the building
blocks.  We also analyse how the coupled operators such as the \DtN
operator resemble discrete Laplacians on the underlying or related
graphs, allowing a deeper understanding of the problem and relating it
to problems of graph Laplacians.

In particular, the resolvent formula~\eqref{eq:krein-res} gives an
expression of a globally defined object, namely the coupled Neumann
operator in terms of objects from the building blocks (see e.g.~the
formulas for $\HDir$, $\SDir(z)$ and $\Lambda(z)$ in
\Thms{vx-coupling}{ed-coupling}).  Hence the understanding of the
nature how $\Lambda(z)$ is obtained from the building blocks is
essential in understanding the global operator $\HNeu$.

\subsection{Direct sum of \aBVPs}
\label{sec:dir-sum}
Given a family $(\Pi_\alpha)_{\alpha \in \Alpha}$ of \aBVPs, we define
the \emph{direct sum} via\footnote{The direct sum of Hilbert spaces
  always refers to the Hilbert space closure of the algebraic direct
  sum in this note.}
\begin{equation*}
  \bigoplus_{\alpha \in \Alpha} \Pi_\alpha
  :=\Bigl(
    \bigoplus_{\alpha \in \Alpha} \Gamma_\alpha,
    \bigoplus_{\alpha \in \Alpha} \HSaux_\alpha,
    \bigoplus_{\alpha \in \Alpha} \qf h_\alpha,
    \bigoplus_{\alpha \in \Alpha} \HS^1_\alpha,
    \bigoplus_{\alpha \in \Alpha} \HS_\alpha
   \Bigr).
\end{equation*}
The direct sum is an \aBVP provided $\sup_{\alpha \in \Alpha}
\norm{\Gamma_\alpha} < \infty$.  As the direct sum is not coupled, we
also call them \emph{decoupled} and write
\begin{equation*}
  \Pi^\dec
  = \bigl(\Gamma^\dec, \HSaux^\dec, \qf h^\dec, \HS^{1,\dec}, \HS^\dec\bigr)
  := \bigoplus_{\alpha \in \Alpha} \Pi_\alpha.
\end{equation*}
All derived objects such as the Dirichlet solution operator or the
\DtN operator are also direct sums of the correspondent objects.

\subsection{Vertex coupling}
\label{sec:vx-coupling}
We now construct a new space from building blocks associated with each
vertex.  Let $G=(V,E,\bd)$ be a graph.  For each \emph{vertex} $v \in
V$ we assume that there is an \aBVP $\Pi_v=(\Gamma_v,\HSaux_v,\qf h_v,
\HS^1_v,\HS_v)$.

\begin{definition}
  \label{def:vx-coupling}
  We say that the family of \aBVPs $(\Pi_v)_{v \in V}$ allows a
  \emph{vertex coupling}, if the following holds:
  \begin{enumerate}
  \item
    \label{def.vx-coupling.a}
    Assume that $\sup_{v \in V} \norm {\Gamma_v} < \infty$.
  \item
    \label{def.vx-coupling.b}
    Assume there is a Hilbert space $\HSaux_e$ and a bounded operator
    $\map{\pi_{v,e}}{\HSaux_v}{\HSaux_e}$ for each edge $e \in E$.
    Let $\HSaux_v^{\max}:= \bigoplus_{e \in E_v} \HSaux_e$ and
    $\map{\iota_v}{\HSaux_v}{\HSaux_v^{\max}}$, $\iota_v
    \phi_v=(\pi_{v,e} \phi_v)_{e \in E_v}$.  We assume that $\iota_v$
    is an isometric embedding.
 
  \item
    \label{def.vx-coupling.d}
    Assume that $\ran \Gamma_{\bd_-e,e}=\ran
    \Gamma_{\bd_+e,e}=: \HSaux_e^{1/2}$ for all edges $e \in
    E$, where $\map{\Gamma_{v,e}:=\pi_{v,e}
      \Gamma_v}{\HS^1_v}{\HSaux_e}$.
  \end{enumerate}
  We set $\map{\pi_v:=\iota_v^*}{\HSaux_v^{\max}}{\HSaux_v}$,
  then $\pi_v \psi = \sum_{e \in E_v} \pi_{v,e}^* \psi_e$.  We say
  that the vertex coupling is \emph{maximal} if $\iota_v$ is
  surjective (hence unitary).  In this case, we often identify
  $\HSaux_v$ with $\HSaux_v^{\max}$.
\end{definition}

We start with an example with maximal vertex coupling spaces $\HSaux_v
\cong \HSaux_v^{\max}$ (an example with non-maximal vertex coupling
spaces will be given in \Sec{vx-ed-coupling}):
\begin{example}
  \label{ex:vx-coupling}
  Assume that we have a graph-like manifold $X$ (without edge
  contributions), i.e., $X=\bigcup_{v \in V} X_v$ such that $X_v$ is
  closed in $X$ and $Y_e := X_{\bd_-e} \cap X_{\bd_+e}$ is a
  smooth submanifold.  Then $\HS_v=\Lsqr {X_v}$, $\qf
  h_v(f)=\int_{X_v} \abssqr {df}$, $\dom \qf h_v = \HS_v^1=\Sob{X_v}$
  and $\HSaux_v=\Lsqr{\bd X_v}$, $\Gamma_v f=f \restr{\bd X_v}$; and
  $\Pi_v=(\Gamma_v,\HSaux_v,\qf h_v,\HS_v^1,\HS_v)$ is an \aBVP.
  Moreover, $(\Pi_v)_v$ allows a vertex coupling with
  $\HSaux_e=\Lsqr{Y_e}$ with maps $\map{\pi_{v,e}}{\Lsqr{\bd
      X_v}}{\Lsqr{Y_e}}$ being the restriction of a function on $\bd
  X_v$ onto one of its components $Y_e \subset \bd X_v$.  Note that
  $\ran \Gamma_{\bd_\pm e,e}=\Sob[1/2] {Y_e}$.  As
  \begin{equation*}
    \HSaux_v^{\max}
    := \bigoplus_{e \in E_v} \HSaux_e
    =\bigoplus_{e \in E_v} \Lsqr{Y_e}
    =\Lsqr{\bd X_v}
    =\HSaux_v,
  \end{equation*}
  the vertex coupling is \emph{maximal}.
  Condition~\itemref{def.vx-coupling.a} is typically fulfilled, if the
  length of each end of a building block $X_v$ is bounded from below
  by some constant $\ell_0/2>0$.
\end{example}

We construct an \aBVP $\Pi=(\Gamma,\HSaux,\qf h, \HS^1,\HS)$ from
$(\Pi_v)_v$ as follows:
\begin{align*}
  \HS := &\bigoplus_{v \in V} \HS_v, \qquad 
  \HS^{1,\dec} := \bigoplus_{v \in V} \HS^1_v, \qquad
  \qf h^\dec := \bigoplus \qf h_v;\\ 
  \HS^1 :=&
  \bigset{f=(f_v)_{v \in V} \in \HS^{1,\dec}}
         {\forall e \in E, \; \bd e=(v,w) \colon
             \; \Gamma_{v,e} f_v = \Gamma_{w,e} f_w =: \Gamma_e f},\\ 
  \qf h := &\qf h^\dec \restr {\HS^1}, \qquad 
  \HSaux := \bigoplus_{e \in E} \HSaux_e, \qquad
  \map \Gamma {\HS^1} \HSaux, \quad
  \Gamma f=(\Gamma_{\bd_\pm e, e} f_{\bd_\pm e})_{e \in E}.
\end{align*}
Denote by $\iota$ the map
\begin{equation*}
 \map \iota {\HSaux=\bigoplus_{e \in E}\HSaux_e}
      {\HSaux^\dec=\bigoplus_{v \in V}\HSaux_v}, \qquad
      (\iota \phi)_v = \pi_v \ul \phi(v) =
      \sum_{e \in E_v} \pi_{v,e}^* \phi_e,
\end{equation*}
where $\ul \phi(v)=(\phi_e)_{e \in E_v} \in \HSaux_v^{\max}$ (see
\Def{vx-coupling} for the notation).  It is easy to see that
$\map{\iota^*} {\HSaux^\dec}\HSaux$ acts as
\begin{equation*}
  (\iota^*\psi)_e = \sum_{v = \bd_\pm e} \pi_{v,e}\psi(v)
\end{equation*}
\begin{theorem}
  \label{thm:vx-coupling}
  Assume that $(\Pi_v)_{v \in V}$ is a family of \aBVPs allowing a
  \emph{vertex coupling}, then the following holds:
  \begin{enumerate}
  \item
    \label{vx-coupling.a}
    The quintuple $\Pi=(\Gamma,\HSaux,\qf h, \HS^1,\HS)$ as
    constructed above is an \aBVP.
  \item
    \label{vx-coupling.b}
    We have $\ker \Gamma=\bigoplus_{v \in V} \ker \Gamma_v$, $\HDir =
    \bigoplus_{v \in V} \HDir_v$ (i.e., the Dirichlet operator is
    decoupled) and $\spec \HDir = \clo{\bigcup_{v \in V} \spec
      {\HDir_v}}$.
  \item
    \label{vx-coupling.c}
    In particular, if all \aBVPs $\Pi_v$ have dense Dirichlet domain
    then $\Pi$ also has dense Dirichlet domain.

  \item
    \label{vx-coupling.c'}
    The Neumann operator is coupled and $f \in \dom \HNeu$ iff $f \in
    \bigoplus_{v \in V} \WS_v$ and
    \begin{equation*} 
      \Gamma_{v,e}f_v=\Gamma_{w,e}f_w, \quad
      \Gamma'_{v,e} f_v + \Gamma'_{w,e} f_w=0
      \qquad \forall e \in E, \;\bd e = (v,w)
    \end{equation*}
    with $\HNeu f= (\Hmax f)_{v \in V}$ (see~\eqref{eq:green} for the
    notation).
  \item
    \label{vx-coupling.d}
    We have $S(z)\phi = S^\dec(z) \iota \phi = \bigoplus_{v \in V}
    S_v(z)\ul \phi(v)$ for $z \notin \spec \HDir$.

  \item
    \label{vx-coupling.e}
    Moreover, if all $\Pi_v$ are elliptically regular such that
    $\sup_{v \in V}\norm[\HSaux_v\to\HS_v]{S_v} < \infty$, then $\Pi$
    is also elliptically regular.
  \item
    \label{vx-coupling.f}
    We have $\Lambda(z) = \iota^* \Lambda^\dec(z) \iota$ for $z \in \C
    \setminus \spec \HDir$, i.e.,
    \begin{equation*}
      \bigl(\Lambda(z) \phi)\bigr)_e
      = \sum_{v =\bd_\pm e} \pi_{v,e} 
          \bigl(\Lambda_v(z) \ul \phi(v)\bigr).
    \end{equation*}
  \end{enumerate}
\end{theorem}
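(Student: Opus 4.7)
The plan is to exploit the fact that the decoupled system $\Pi^\dec=\bigoplus_{v\in V}\Pi_v$ is itself an \aBVP (by the uniform bound on $\norm{\Gamma_v}$) and that $\Pi$ differs from it only by imposing the edge-matching conditions $\Gamma_{v,e}f_v=\Gamma_{w,e}f_w$. The isometric embedding $\map\iota\HSaux{\HSaux^\dec}$, $(\iota\phi)_v=\pi_v\ul\phi(v)=\sum_{e\in E_v}\pi_{v,e}^*\phi_e$, will serve as the bridge between $\Pi$ and $\Pi^\dec$ and will mediate almost every identity below.

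For part (a), $\HS^1$ is a closed subspace of $\HS^{1,\dec}$ since each matching condition is continuous in the $\HS^{1,\dec}$-norm (as $\Gamma_{v,e}=\pi_{v,e}\Gamma_v$ is bounded by $\sup_v\norm{\Gamma_v}<\infty$); hence $\qf h=\qf h^\dec\restr{\HS^1}$ is closed and non-negative with dense domain, and $\Gamma$ is bounded. For density of $\ran\Gamma$ in $\HSaux$, I would take $\phi$ with finite support $F\subset E$ and $\phi_e\in\HSaux_e^{1/2}$, and at each vertex $v$ touching $F$ construct $\tilde\phi_v\in\HSaux_v^{1/2}$ with $\pi_{v,e}\tilde\phi_v=\phi_e$ for $e\in F\cap E_v$ (using the surjectivity $\ran\Gamma_{v,e}=\HSaux_e^{1/2}$ together with the isometric embedding $\iota_v$), lift to $f_v\in\HS_v^1$ via $\Gamma_v f_v=\tilde\phi_v$, and set $f_v=0$ elsewhere; a further density argument in $\HSaux_e$ finishes the step.

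Parts (b), (c), and (c') are structural. From $\Gamma f=0$ one obtains $\iota_v\Gamma_v f_v=0$, and isometry of $\iota_v$ forces $\Gamma_v f_v=0$, giving $\ker\Gamma=\bigoplus_v\ker\Gamma_v$; the form $\qf h\restr{\ker\Gamma}$ then decouples, so $\HDir=\bigoplus_v\HDir_v$, and the spectrum identity is the standard one for direct sums. Part (c) is immediate. For (c'), I apply the abstract Green formula~\eqref{eq:green} on each vertex piece to $f\in\bigoplus_v\WS_v$ and $g\in\HS^1$, reorder the boundary contributions via~\eqref{eq:ed-vx}, and use that $\pi_{v,e}\Gamma_v g_v=\Gamma_e g$ is the common value at the two endpoints of $e$; testing first against $g\in\ker\Gamma$ identifies $\HNeu f$ as the vertex-wise $\Hmax f_v$, after which varying $\Gamma_e g$ extracts the stated edge condition.

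For (d), note that for $\phi\in\HSaux^{1/2}=\ran\Gamma$ the vertex datum $\ul\phi(v)$ automatically lies in $\iota_v(\HSaux_v)$, so setting $h_v:=S_v(z)(\iota\phi)_v$ yields $\pi_{v,e}\Gamma_v h_v=\pi_{v,e}(\iota\phi)_v=\phi_e$; hence $h=(h_v)_v$ satisfies the matching conditions and $\Gamma h=\phi$, while the weak-solution identity decouples vertex-by-vertex, forcing $h=S(z)\phi$. Part (e) follows from
\[
\norm[\HS]{S(z)\phi}^2=\sum_v\norm[\HS_v]{S_v(z)(\iota\phi)_v}^2\le C^2\norm[\HSaux^\dec]{\iota\phi}^2\le 2C^2\norm[\HSaux]\phi^2,
\]
the last bound using~\eqref{eq:ed-vx} since each edge contributes to at most two vertex summands. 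For (f), one has
\[
\qf l_z(\phi,\psi)=(\qf h-z)(S(z)\phi,S(-1)\psi)=\qf l^\dec_z(\iota\phi,\iota\psi)=\iprod[\HSaux]{\iota^*\Lambda^\dec(z)\iota\phi}{\psi},
\]
and unpacking $\iota^*$ yields the per-edge formula. The main obstacle I anticipate is the density of $\ran\Gamma$ in~(a): producing a vertex preimage requires the local datum $(\phi_e)_{e\in E_v}$ to lie in $\iota_v(\HSaux_v)\subset\HSaux_v^{\max}$, a genuine constraint outside the maximal case that forces one to distribute nonzero components carefully across incident edges. Once (a) and the decoupling identities $\HDir=\bigoplus_v\HDir_v$ and $S(z)=S^\dec(z)\iota$ are in hand, everything else reduces to direct-sum bookkeeping.
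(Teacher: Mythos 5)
Your proposal follows the paper's own proof essentially step for step: closedness of $\HS^1$ as an intersection of kernels of the bounded matching maps; boundedness of $\Gamma$ from the reordering identity~\eqref{eq:ed-vx} together with the isometry of $\iota_v$; the identity $\ker\Gamma=\bigoplus_{v}\ker\Gamma_v$ from the injectivity of $\iota_v$, whence the decoupling of $\HDir$ and its spectrum; Green's formula~\eqref{eq:green} applied on each $\Pi_v$ for part~(\ref{vx-coupling.c'}); and the explicit decoupled solution operator for part~(\ref{vx-coupling.d}), from which parts~(\ref{vx-coupling.e}) and~(\ref{vx-coupling.f}) are direct-sum bookkeeping --- the paper indeed dismisses these last items as ``obvious'' or as one-line consequences of~(\ref{vx-coupling.d}) and the formula $\qf l_z(\phi,\psi)=(\qf h-z\qf 1)(S(z)\phi,S(-1)\psi)$, exactly as you spell out.

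Concerning the obstacle you flag in the density step of part~(\ref{vx-coupling.a}): the paper does not resolve it. Its proof simply asserts $\ran\Gamma=\bigoplus_{e\in E}\ran\Gamma_e$ (algebraic direct sum) and deduces density, but this identity presupposes precisely what you identify, namely that a datum prescribed on the edges incident to $v$ --- nonzero on some of them, zero on the others --- lies in $\iota_v(\ran\Gamma_v)$. In the maximal case this can be arranged up to an approximation argument (your sketch), but for non-maximal couplings it can fail outright: take the path graph with vertices $u,v,w$, edges $e_1=(u,v)$, $e_2=(v,w)$, all building blocks trivial, $\HSaux_{e_1}=\HSaux_{e_2}=\C$, $\HSaux_v=\C$ and $\pi_{v,e_i}\eta=\eta/\sqrt2$, so that $\iota_v$ is the isometric diagonal embedding of $\C$ into $\C^2$; then every $\phi\in\ran\Gamma$ satisfies $\phi_{e_1}=\phi_{e_2}$, hence $\ran\Gamma$ is not dense in $\HSaux=\C^2$, although all conditions of \Def{vx-coupling} hold. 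So this is a gap (really a missing hypothesis, such as maximality of the coupling or a compatibility of $\iota_v(\HSaux_v)$ with the edge decomposition) in the paper itself, not a defect of your argument relative to it; consistently, in \Sec{vx-ed-coupling} the paper switches to the smaller boundary space $\bigoplus_{v}\HSaux_v$ precisely when non-maximal vertex couplings occur. Apart from making such a hypothesis explicit, your proof is complete and coincides with the paper's.
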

\begin{proof}
  \itemref{vx-coupling.a}~The space $\HS^1$ is closed in
  $\HS^{1,\dec}$ as intersection of the closed spaces
  \begin{equation*}
    \set{f \in \HS^{1,\dec}} {\Gamma_{\bd_-e,e} f_{\bd_-e} =
      \Gamma_{\bd_+e,e} f_{\bd_+e}}
  \end{equation*}
  (note that since $\Gamma_{v,e}$ are bounded operators, the latter
  sets are closed).  Hence $\HS^1$ is closed and $\qf h$ is a closed
  form.  Moreover, the operator $\Gamma$ is bounded, as
  \begin{equation*}
    \normsqr[\HSaux] {\Gamma f}
    = \sum_{e \in E} \normsqr[\HSaux_e]{\Gamma_e f}
    = \frac 12\sum_{v \in V} \sum_{e \in E_v}
            \normsqr[\HSaux_e]{\Gamma_{v,e} f_v}
    = \frac 12 \sum_{v \in V} \normsqr[\HSaux_v^{\max}]{\iota_v \Gamma_v f_v}
  \end{equation*}
  using~\eqref{eq:ed-vx} and this can be estimated by $\sup_v
  \normsqr{\Gamma_v} \normsqr[\HS^1] f/2$ as $\iota_v$ is isometric.

  Finally, as $\ran \Gamma_v=\HSaux^{1/2}_v$ is dense in $\HSaux_v$ we
  also have that $\ran \Gamma_{v,e}$ is dense in $\HSaux_{v,e}$
  (applying the bounded operator $\map{\pi_{v,e}}{\HSaux_v}{\HSaux_e}$
  to a dense set).  As $\ran \Gamma = \bigoplus_{e \in E} \ran
  \Gamma_e$ (algebraic direct sum) with $\ran \Gamma_e=\ran
  \Gamma_{\bd_\pm e, e}$, the density of $\ran \Gamma$ in $\HSaux$
  follows.
  
  \itemref{vx-coupling.b}~We have $f \in \bigoplus_{v \in V} \ker
  \Gamma_v$ iff $f_v \in \ker \Gamma_v$ for all $v \in V$.  Moreover,
  $\ker \Gamma_v = \bigcap_{e \in E_v} \ker \Gamma_{v,e}$ as
  $\bigcap_{e \in E_v}\ker \pi_{v,e}=\{0\} \subset \HSaux_v$ (using
  the injectivity of $\iota_v$, see
  \Defenum{vx-coupling}{def.vx-coupling.b}).  By definition, $\Gamma_e
  := \Gamma_{v,e}$ for $v=\bd_\pm e$, hence we have
  \begin{equation*}
    \ker \Gamma = \bigoplus_{e \in E} \ker \Gamma_e = \bigoplus_{v \in
      V} \ker \Gamma_v.
  \end{equation*}

  \itemref{vx-coupling.c}~In particular, if all spaces $\ker \Gamma_v$
  are dense in $\HS_v$, then $\ker \Gamma$ is dense in $\HS$.

  \itemref{vx-coupling.c'}~follows from a simple calculation
  using~\eqref{eq:green} on each \aBVP $\Pi_v$.

  \itemref{vx-coupling.d} is obvious, as well
  as~\itemref{vx-coupling.e}

  \itemref{vx-coupling.f}~The formula follows from $\qf
  l_z(\phi,\psi)=(\qf h-z \qf 1)(S(z)\phi,S(-1)\psi)$ and
  part~\itemref{vx-coupling.d}.
\end{proof}

\begin{examples}
  \label{ex:disc-graph}
  \indent
  \begin{enumerate}
  \item
    In \Ex{vx-coupling}, the entire space is $\HS=\Lsqr X$ and the
    Neumann operator is the usual Laplacian on the graph-like manifold
    $X$.

  \item 
    \label{disc-graph2}
    Assume that $G=(V,E,\bd)$ is a discrete graph.  We decompose $G$
    into its \emph{star components} $G_v=(\{v\} \dcup E_v, E_v, \wt
    \bd)$, i.e., each edge in $G$ adjacent to $v$ becomes also a
    vertex in $G_v$.  As boundary of $G_v$ we set $\bd G_v=E_v$ If we
    identify the new vertices $e \in V(G_{\bd_-e})$ and $e \in
    V(G_{\bd_+e})$ of the star components $G_{\bd_-e}$ and $G_{\bd_+
      e}$ for all edges $e \in E$ we just obtain the subdivision graph
    $SG$ (see \Def{subdivision}).

    Let $\Pi_v$ be the \aBVP associated with the graph $G_v$ and
    boundary $\bd G_v=E_v$, i.e., $\Pi_v=(\C^{E_v},\Gamma_v, \qf h_v,
    \C^{E_v \dcup \{v\}},\C^{E_v \dcup \{v\}})$ with $\Gamma_v f= f
    \restr{E_v}$ (see \Exenum{abvp}{abvp-graph}), where
    $\C^{E_v}:=\set{\map \phi{E_v} \C}{\text{$\phi$ map}}$ denotes the
    set of maps or families with coordinates indexed by $e \in E_v$.
    Denote for short $d_v=\deg v$.  Of course, $\C^{E_v}$ is
    isomorphic to $\C^{d_v}$, but this isomorphism needs a numbering
    of the edges which is unimportant for our purposes.  The Neumann
    operator, written as a matrix with respect to the orthonormal
    basis $\phi_v:=d_v^{-1/2}\delta_v$ ($v \in V$), has block
    structure $A_v=\id_{E_v}$ (identity matrix of dimension $d_v$),
    $B_v=-d_v^{-1/2}(1, \dots, 1)^{\mathrm T}$ and $D_v=1$.  In particular, the
    \DtN operator is $\Lambda_v(z)=(1-z)\id_{E_v} - (d_v(1-z))^{-1}
    \1_{E_v \times E_v}$, where $\1_{E_v \times E_v}$ is the
    $(d_v\times d_v)$-matrix with all entries $1$.

    The vertex-coupled \aBVP $\Pi$ of the family $(\Pi_v)_{v \in V}$
    (it is clear that this family allows a vertex coupling) is now the
    \aBVP of the subdivision graph $SG$ of $G$ with boundary $\bd
    SG=E$, the edges of $G$.  More precisely, we have already
    identified the subspace $\set{\phi \in \bigoplus_{v \in V}
      \lsqr{G_v}}{f_{v,e}=f_{w,e} \;\forall e \in E, \bd e=(v,w)}$, with
    $\lsqr{SG}$.  The coupled Neumann operator is the Laplacian of the
    subdivision graph $SG$, i.e., $\HNeu=\laplacian{SG}$.  Note that
    we can embed $\lsqr G$ into $\lsqr{SG}$, $f \mapsto \wt f$, with
    $\wt f(v)=f(v)$ and $\wt f(e)=(f(\bd_+e)+f(\bd_-e))/2$; moreover,
    $2\qf h_G(f)=\qf h_{SG}(\wt f)$ for the corresponding energy forms.
    The coupled \DtN operator is
    \begin{equation*}
      \bigl(\Lambda(z) \phi)\bigr)_e 
      = 2(1-z) \phi_e 
      - \frac 1{1-z} \sum_{v =\bd_\pm e} \frac 1{\deg v}\sum_{e'\in E_v} \phi_{e'}.
    \end{equation*}
    For $z=0$, this is just the formula for a Laplacian on the line
    graph $LG$ of $G$ (the line graph has as vertices the edges of
    $G$, and two such edges are adjacent, if they meet in a common
    vertex, see e.g.~\cite[Ex.~3.14~(iv)]{post:09c}). In particular,
    if $G$ is $r$-regular, then $LG$ is $(2r-2)$-regular and
    \begin{equation}
      \label{eq:dtn.line-graph}
      \Lambda(z)
      = 2(1-z) - \frac{2r-2} {(1-z)r} (1-\laplacian{LG}).
    \end{equation}
  \end{enumerate}
\end{examples}
In particular, applying the spectral relation~\eqref{eq:spec-rel} to
the last example (the Dirichlet spectrum of all star components is
$\{1\}$ as $\HDir_v=D_v=1$) we rediscover the following result
(see~\cite{shirai:00}):
\begin{corollary}
  The spectra of the subdivision and line graph of an $r$-regular
  graph are related by
  \begin{equation*}
    \lambda \in \spec{\laplacian{SG}} \iff
    1- \frac r{r-1}(1-\lambda)^2 \in \spec{\laplacian{LG}}
  \end{equation*}
  provided $\lambda \ne 1$.
\end{corollary}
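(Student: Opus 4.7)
The plan is to apply the spectral characterisation \eqref{eq:spec-rel} to the vertex-coupled \aBVP $\Pi$ of \Exenum{disc-graph}{disc-graph2}, whose Neumann operator is precisely $\laplacian{SG}$, and whose \DtN operator has the explicit form given in \eqref{eq:dtn.line-graph} because $G$ is $r$-regular.

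First I would verify that the hypothesis $\lambda \neq 1$ of the corollary matches the condition $\lambda \notin \spec \HDir$ required by \eqref{eq:spec-rel}. By \Thmenum{vx-coupling}{vx-coupling.b}, $\spec \HDir = \clo{\bigcup_v \spec{\HDir_v}}$, and the computation in \Exenum{disc-graph}{disc-graph2} shows $\HDir_v = D_v = 1$ for each star component $G_v$, hence $\spec \HDir = \{1\}$. Therefore, for $\lambda \in \R$ with $\lambda \neq 1$, \eqref{eq:spec-rel} gives
\begin{equation*}
  \lambda \in \spec{\laplacian{SG}} \quad\iff\quad 0 \in \spec{\Lambda(\lambda)}.
\end{equation*}

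Next I would substitute $z=\lambda$ into \eqref{eq:dtn.line-graph}, which expresses $\Lambda(\lambda)$ as an affine function of $\laplacian{LG}$. Writing $\Lambda(\lambda) = a(\lambda)\cdot \id - b(\lambda)\cdot \laplacian{LG}$ with appropriate scalars (for $\lambda\neq 1$), one sees by the spectral mapping for affine functions of a single self-adjoint operator that $0 \in \spec{\Lambda(\lambda)}$ if and only if $a(\lambda)/b(\lambda) \in \spec{\laplacian{LG}}$. The remaining step is a purely algebraic simplification: from \eqref{eq:dtn.line-graph}, equating $2(1-\lambda) = \frac{2(r-1)}{(1-\lambda)r}(1-\mu)$ and solving for $\mu$ yields $\mu = 1 - \frac{r}{r-1}(1-\lambda)^2$, which gives exactly the stated equivalence.

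There is no real obstacle here: once the spectral relation \eqref{eq:spec-rel} and the explicit formula \eqref{eq:dtn.line-graph} are in place, the proof reduces to a one-line rearrangement. The only minor subtlety is to rule out $\lambda = 1$ (since $\Lambda(\lambda)$ is only defined for $\lambda \notin \spec \HDir = \{1\}$), and to note that the argument uses the self-adjointness of $\laplacian{LG}$ in order to translate non-invertibility of the affine combination into a spectral condition on $\laplacian{LG}$ itself.
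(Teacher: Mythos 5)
Your proposal is correct and follows exactly the paper's route: the paper proves the corollary in a single sentence by applying the spectral relation~\eqref{eq:spec-rel} to \Exenum{disc-graph}{disc-graph2} (noting that the Dirichlet spectrum of all star components is $\{1\}$, so $\spec\HDir=\{1\}$) and then reading off the algebraic rearrangement of~\eqref{eq:dtn.line-graph}. Your write-up merely makes explicit the details the paper leaves implicit (the identification $\HNeu=\laplacian{SG}$, the spectral mapping for the affine function of the self-adjoint operator $\laplacian{LG}$, and the solving for $\mu$), all of which are correct.
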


\subsection{Edge coupling}
\label{sec:ed-coupling}
Let us now couple \aBVPs indexed by the edges of a given graph
$G=(V,E,\bd)$: For each \emph{edge} $e \in E$ we assume that there is
an \aBVP $\Pi_e=(\Gamma_e,\HSaux_e,\qf h_e, \HS^1_e,\HS_e)$.

\begin{definition}
  \label{def:ed-coupling}
  We say that the family of \aBVPs $(\Pi_e)_{e \in E}$ allows an
  \emph{edge coupling}, if the following holds:
  \begin{enumerate}
  \item Assume that $\sup_{e \in E} \norm {\Gamma_e} < \infty$.
  \item Assume that for each vertex $e \in E$ there is a decomposition
    $\HSaux_e = \HSaux_{e,\bd_-e} \oplus \HSaux_{e,\bd_+e}$ and
    $\Gamma_e f = \Gamma_{e,\bd_-e} f \oplus \Gamma_{e,\bd_+e} f$,
    where $\map{\Gamma_{e,v}}{\HS^1_e}{\HSaux_{e,v}}$.
  \end{enumerate}
  We set $\HSaux_v^{\max} := \bigoplus_{e \in E_v} \HSaux_{e,v}$.
\end{definition}

Note that the sum over all maximal spaces $\HSaux_v^{\max}$ is the
decoupled space, as
\begin{equation}
  \label{eq:ed-dec}
  \HSaux^{\max}
  := \bigoplus_{v \in V} \HSaux_v^{\max} 
  = \bigoplus_{v \in V} \bigoplus_{e \in E_v} \HSaux_{e,v}
  = \bigoplus_{e \in E} \bigoplus_{v =\bd_\pm e} \HSaux_{e,v}
  =\bigoplus_{e \in E} \HSaux_e
  = \HSaux^\dec.
\end{equation}
Denote $\ul \phi(v):=(\phi_e(v))_{e \in E_v} \in \HSaux_v^{\max}$ the
collection of all edge contributions at the vertex $v \in V$, where
$\phi_e=(\phi_e(\bd_-e),\phi_e(\bd_+e)) \in \HSaux_{e,\bd_-e} \oplus
\HSaux_{e,\bd_+e}$.

Let $\HSaux_v \subset \HSaux_v^{\max}$ be a closed subspace for each
$v \in V$.  We construct an \aBVP $\Pi=(\Gamma,\HSaux,\qf h,
\HS^1,\HS)$ from the family $(\Pi_e)_e$ and the subspace
$\HSaux:=\bigoplus_v \HSaux_v$ as a restriction of the decoupled \aBVP
$\bigoplus_{e \in E} \Pi_e$ (see \Sec{dir-sum}):
\begin{align*}
  \HS:= &\bigoplus_{e \in E} \HS_e, \qquad %
  \HS^{1,\dec} := \bigoplus_{e \in E} \HS^1_e, \qquad %
  \qf h^\dec := \bigoplus \qf h_e;\\ %
  \HS^1 := & 
     \bigset{f=(f_e)_{e \in E} \in \HS^{1,\dec}} 
         {\Gamma^\dec f \in \HSaux}, \quad  %
  \qf h := \qf h^\dec \restr {\HS^1}, \quad %
  \Gamma := \Gamma^\dec \restr{\HS^1}
\end{align*}
where $\map{\Gamma^\dec }{\HS^{1,\dec}}{\HSaux^\dec=\HSaux^{\max}}$.
Denote by $\iota$ the embedding $\map \iota \HSaux {\HSaux^\dec}$.

\begin{theorem}
  \label{thm:ed-coupling}
  Assume that $(\Pi_e)_{e \in E}$ is a family of \aBVPs allowing an
  edge coupling and let $\HSaux_v \subset \HSaux_v^{\max}$ be a closed
  subspace for each $v \in V$, then the following holds:
  \begin{enumerate}
  \item
    \label{ed-coupling.a}
    The quintuple $\Pi=(\Gamma,\HSaux,\qf h, \HS^1,\HS)$ as
    constructed above is an \aBVP.
  \item
    \label{ed-coupling.b}
    We have $\ker \Gamma=\bigoplus_{e \in E} \ker \Gamma_e$, $\HDir =
    \bigoplus_{e \in E} \HDir_e$ (the Dirichlet operator is decoupled)
    and $\spec \HDir = \clo{\bigcup_{e \in E} \spec {\HDir_e}}$.
  \item
    \label{ed-coupling.c}
    In particular, if all \aBVPs $\Pi_e$ have dense Dirichlet domain
    then $\Pi$ also has dense Dirichlet domain.

  \item
    \label{ed-coupling.c'}
    The Neumann operator is coupled and is given by
    \begin{equation*}
      \dom \HNeu
      = \bigset{f \in \bigoplus_{e \in E} \WS_e} 
      {\Gamma f  \in \HSaux, \;
       \Gamma' f \in \HSaux^{\max} \ominus \HSaux}
    \end{equation*}
    with $\HNeu f= (\Hmax f)_{v \in V}$ (see~\eqref{eq:green} for the
    notation).

  \item 
    \label{ed-coupling.d}
    We have $S(z)\phi = S^\dec(z) \phi = \bigoplus_{e \in E}
    S_e(z)\phi_e$ where $\phi \in \HSaux \subset \HSaux^{\dec}$.

  \item
    \label{ed-coupling.e}
    Moreover, if all $\Pi_e$ are elliptically regular with uniformly
    bounded elliptic regularity constants (i.e., $\sup_{e \in
      E}\norm[\HSaux_e\to\HS_e]{S_e} < \infty$), then $\Pi$ is also
    elliptically regular.
  \item
    \label{ed-coupling.f}
    We have $\Lambda(z) = \iota^* \Lambda^\dec(z) \iota$. i.e., if
    $\psi=\Lambda(z)\phi$, then
    \begin{equation*}
      \ul \psi(v)
      := \bigl(\psi_e\bigr)_{e \in E_v}
      = \pi_v \bigl( (\Lambda_e(z) \phi_e)(v) \bigr)_{e \in E_v}
    \end{equation*}
    for $z \notin \spec \HDir$, where
    $\map{\pi_v}{\HSaux_v^{\max}}{\HSaux_v}$ is the adjoint of
    $\map{\iota_v}{\HSaux_v}{\HSaux_v^{\max}}$.
  \end{enumerate}
\end{theorem}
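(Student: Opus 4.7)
The argument parallels the proof of \Thm{vx-coupling}; the conceptual novelty is that the coupling is now imposed on the boundary side through the closed subspace $\HSaux\subset\HSaux^{\max}=\HSaux^\dec$, while the forms and Hilbert spaces remain a plain direct sum. The strategy is to realise $\Pi$ as a restriction of the decoupled \aBVP $\Pi^\dec=\bigoplus_{e\in E}\Pi_e$ via $\HS^1=(\Gamma^\dec)^{-1}(\HSaux)$, $\qf h=\qf h^\dec\restr{\HS^1}$ and $\Gamma=\Gamma^\dec\restr{\HS^1}$, and then to transfer the properties of $\Pi^\dec$ through this restriction.

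For part~\itemref{ed-coupling.a}, $\HS^1$ is closed in $\HS^{1,\dec}$ as the preimage of the closed subspace $\HSaux$ under the bounded map $\Gamma^\dec$, hence $\qf h$ is closed, and $\Gamma$ is bounded with norm controlled by $\sup_e\norm{\Gamma_e}$. The delicate step is the density of $\ran\Gamma=\ran\Gamma^\dec\cap\HSaux$ in $\HSaux$. This is argued locally: for each $v\in V$ and each $e\in E_v$, density of $\ran\Gamma_e$ in $\HSaux_e=\HSaux_{e,\bd_-e}\oplus\HSaux_{e,\bd_+e}$ allows one to prescribe the two endpoint values independently up to arbitrary precision. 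Given $\phi\in\HSaux$, one truncates $\phi$ to a finite subset $V_0\subset V$ (using that $\HSaux$ is a Hilbert direct sum of the $\HSaux_v$), then for each $v\in V_0$ chooses $f_v\in\bigoplus_{e\in E_v}\HS^1_e$ whose values $\Gamma_{e,v}f_{v,e}$ approximate the components of $\phi_v$ and whose opposite-endpoint contributions are made arbitrarily small; assembling these local pieces yields the required approximation inside $\HSaux$. This local density argument is the main obstacle, as for general closed $\HSaux\subset\HSaux^{\max}$ the intersection $\ran\Gamma^\dec\cap\HSaux$ need not be dense in $\HSaux$ without exploiting the product structure of each $\HSaux_e$.

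Parts~\itemref{ed-coupling.b} and~\itemref{ed-coupling.c} follow at once, since $0\in\HSaux$ forces $\ker\Gamma=\ker\Gamma^\dec=\bigoplus_e\ker\Gamma_e$, so the Dirichlet form, operator and spectrum inherit the direct sum structure, and density of each $\ker\Gamma_e\subset\HS_e$ passes to the sum. For~\itemref{ed-coupling.c'}, one applies the abstract Green's formula~\eqref{eq:green} edgewise: for $f\in\bigoplus_e\WS_e$ and $g\in\HS^1$,
\begin{equation*}
   \qf h(f,g)=\iprod[\HS]{\Hmax f}{g}+\iprod[\HSaux^\dec]{\Gamma' f}{\Gamma g},
\end{equation*}
where $\Gamma' f$ is the decoupled Neumann trace. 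Continuity of $g\mapsto\qf h(f,g)$ in the $\HS$-norm on $\HS^1$ is equivalent to the second inner product vanishing for every $g\in\HS^1$; since $\Gamma g$ ranges through a dense subset of $\HSaux$, this is exactly the condition $\Gamma' f\in\HSaux^{\max}\ominus\HSaux$, combined with the standing requirement $\Gamma f\in\HSaux$.

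For~\itemref{ed-coupling.d}, given $\phi\in\HSaux$ the candidate $S^\dec(z)\iota\phi=\bigoplus_eS_e(z)\phi_e$ is a weak solution in $\HS^{1,\dec}$ whose boundary value under $\Gamma^\dec$ equals $\iota\phi\in\HSaux$, hence it belongs to $\HS^1$; uniqueness of the weak solution gives $S(z)\phi=S^\dec(z)\iota\phi$. Part~\itemref{ed-coupling.e} follows from the direct-sum estimate $\normsqr[\HS]{S^\dec\phi}=\sum_e\normsqr[\HS_e]{S_e\phi_e}\le\bigl(\sup_e\norm{S_e}\bigr)^2\normsqr[\HSaux]\phi$. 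Finally for~\itemref{ed-coupling.f}, using~\itemref{ed-coupling.d} one computes
\begin{equation*}
   \qf l_z(\phi,\psi)=(\qf h-z\qf 1)(S(z)\phi,S(-1)\psi)=\qf l_z^\dec(\iota\phi,\iota\psi)=\iprod[\HSaux]{\iota^*\Lambda^\dec(z)\iota\phi}{\psi},
\end{equation*}
which identifies $\Lambda(z)=\iota^*\Lambda^\dec(z)\iota$; unpacking $\iota^*$ as the collection of vertexwise projections $\pi_v=\iota_v^*$ yields the stated componentwise formula.
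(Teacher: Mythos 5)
Your overall strategy is exactly the paper's: realise $\Pi$ as the restriction of the decoupled \aBVP $\bigoplus_{e\in E}\Pi_e$ via $\HS^1=(\Gamma^\dec)^{-1}(\HSaux)$, deduce closedness of $\qf h$ and boundedness of $\Gamma$ from this description, and read off parts \itemref{ed-coupling.b}--\itemref{ed-coupling.f} componentwise as in \Thm{vx-coupling}. Granted part~\itemref{ed-coupling.a}, your treatment of those later parts (the identity $\ker\Gamma=\ker\Gamma^\dec$ since $0\in\HSaux$, Green's formula plus density of $\ran\Gamma$ for the Neumann domain, the decoupled Dirichlet solution operator, the uniform direct-sum estimate, and $\qf l_z(\phi,\psi)=\qf l^\dec_z(\iota\phi,\iota\psi)$) is correct and agrees with the paper, which simply refers back to the vertex-coupling proof for these items.

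The gap is in the density of $\ran\Gamma$ in $\HSaux$ --- precisely the step you single out as the main obstacle. Since $\ran\Gamma=\HSaux\cap\ran\Gamma^\dec$, an approximant of $\phi\in\HSaux$ must be a boundary value lying \emph{exactly} in $\HSaux$ (and in $\ran\Gamma^\dec$). Your construction produces $f\in\HS^{1,\dec}$ with $\Gamma^\dec f$ merely \emph{close} to $\phi$: the vertex components $(\Gamma_{e,v}f_e)_{e\in E_v}$ approximate $\phi_v$, but in general they do not lie in the closed subspace $\HSaux_v$, because the traces $\Gamma_{e,v}f_e$ can only be prescribed up to arbitrary precision, not exactly. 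Consequently $\Gamma^\dec f\notin\HSaux$, hence $f\notin\HS^1$ and $\Gamma^\dec f\notin\ran\Gamma$. What your argument actually proves is the trivial inclusion $\HSaux\subset\clo{\ran\Gamma^\dec}$, not the required density of $\HSaux\cap\ran\Gamma^\dec$ in $\HSaux$; and density of a subspace in the ambient space never implies, by itself, density of its intersection with a closed subspace (a dense subspace can meet an infinite-dimensional closed subspace in $\{0\}$). So ``assembling'' approximate local data cannot close this gap: one needs elements of $\ran\Gamma^\dec$ lying exactly in $\HSaux$, which amounts to a compatibility condition between the vertex spaces $\HSaux_v$ and the trace spaces $\ran\Gamma_{e,v}$. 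For comparison, the paper's own proof is extremely terse at this very point --- it writes $\ran\Gamma=\HSaux\cap\Gamma^\dec(\HS^{1,\dec})$ and asserts density directly from the density of the components $\ran\Gamma_e\subset\HSaux_e$ --- so you located the crux correctly, but your local approximation argument does not supply the missing step either.
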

\begin{proof}
  The proof is very much as the proof of \Thm{vx-coupling}:
  \itemref{ed-coupling.a}~The operator $\Gamma^\dec$ is bounded, and
  $\HS^1$ is closed in $\HS^{1,\dec}$ as preimage of the closed
  subspace $\HSaux$ under $\Gamma^\dec$; in particular, $\Gamma$ is
  bounded.  Moreover, $\ran \Gamma=\Gamma(\HS^1)=\Gamma^\dec(\HS^1) =
  \HSaux \cap \Gamma^\dec(\HS^1)$; since $\Gamma^\dec(\HS^1)$ is dense
  (as all components $\Gamma_e(\HS^1_e)$ are dense in $\HSaux_e$, the
  space $\ran \Gamma$ is also dense in $\HSaux$.
   
  \itemref{ed-coupling.b}--\itemref{ed-coupling.f} can be seen
  similarly as in the proof of \Thm{vx-coupling}.
\end{proof}

\begin{examples}
  \label{ex:disc-graph-ed}
  Let us give some important examples of subspaces $\HSaux$ of
  $\HSaux^{\max}$:
  \begin{enumerate}
    \item
    \label{ed-coupling-2d}
    \myparagraph{Edge coupling of two-dimensional \aBVPs:} Assume that
    all vertex components $\HSaux_{e,v}$ equal $\C$.  Then
    $\HSaux_v^{\max}=\C^{E_v}$ and we choose for example $\HSaux_v :=
    \C(1,\dots,1)$ (\emph{standard} or \emph{Kirchhoff vertex
      conditions}), where $(1,\dots,1) \in \C^{E_v}$ has all $(\deg
    v)$-many components $1$.  It is convenient to choose $\abs[\deg v]
    w=\abs w \sqrt{\deg v}$ as norm on $\C$ (then $\C(1,\dots,1)
    \subset \C^{E_v}$ is isometrically embedded in $(\C,\abs[\deg v]
    \cdot)$ via $(\eta,\dots,\eta) \mapsto \eta$).  A vector $\ul
    \eta(v)$ is of course characterised by the common scalar value
    $\eta(v) \in \C$ and the projection $\pi_v \ul
    \psi(v)=(\psi_e(v))_{e \in E_v}$ is characterised by the sum
    $(\deg v)^{-1}\sum_{e \in E_v} \psi_e(v)$.  Hence we can write the
    \DtN operator as
      \begin{equation}
        \label{eq:dtn.ed-coupling}
        (\Lambda(z) \phi)(v)
        = \frac 1 {\deg v} \sum_{e \in E_v} (\Lambda_e(z) \phi_e)(v)
      \end{equation}
      for $\phi \in \HSaux = \lsqr{V,\deg}$.  This formula is a
      generalisation of the formula for the discrete (normalised)
      Laplacian.  One can also choose a more general subspace
      $\HSaux_v \subset \HSaux_v^{\max}=\C^{E_v}$, the resulting \DtN
      operators look like generalised discrete Laplacians described
      e.g.\ in~\cite[Sec.~3]{post.in:08} or~\cite[Sec.~3]{post:09c}.  A
      similar approach has been used in~\cite{pankrashkin:06}.
  \item
    \label{ed-coupling-2d-trivial}
    \myparagraph{Edge coupling of two-dimensional \emph{trivial} \aBVPs
      gives back the original discrete graph:} Let us now treat a
    special case of~\itemref{ed-coupling-2d}: Let $\Pi_e=(\id, \C^2,
    \qf h_e, \C^2,\C^2)$ be a trivial \aBVP for each $e \in E$.  The
    \aBVP $\Pi_v$ can be understood as coming from a graph consisting
    only of two vertices $\bd_\pm e$ and one edge $e$, and both
    vertices belong to the boundary.  The energy form is $\qf
    h_e(f)=\abssqr{f_2-f_1}$ for $f=(f_1,f_2)\in \C^2$, see
    \Exenum{abvp}{abvp-graph}).  In this case,
    \begin{equation*}
      \Lambda_e(z)
      =\begin{pmatrix}
         1-z & -1\\ -1 & 1-z
       \end{pmatrix}
    \end{equation*}
    and the \DtN operator becomes
      \begin{equation*}
        (\Lambda(z) \phi)(v)
        = \frac 1 {\deg v} \sum_{e \in E_v} (\phi(v)-\phi(v_e))
        - z \phi(v),
      \end{equation*}
      i.e., $\Lambda(z)=\laplacian G - z$, i.e., the \DtN operator is
      just the shifted normalised Laplacian on $G$.  Note that in this
      case, the Neumann Laplacian is also the \DtN operator at $z=0$,
      i.e., $\HNeu=\Lambda(0)=\laplacian G$, as the global form $\qf
      h$ is $\qf h(f)=\sum_e \qf h_e(f_e)$.
    \item 
      \label{synchronous}
      \myparagraph{Standard vertex conditions:} Here, we describe an
      edge coupling with a special choice of vertex spaces $\HSaux_v
      \subset \HSaux_v^{\max}$, similar to the standard or Kirchhoff
      vertex conditions of a quantum graph.

      Assume that for given $v \in V$, the vertex component
      $\HSaux_{e,v}$ of $\HSaux_e$ equals a given Hilbert space
      $\HSaux_{v,0}$ for all $e \in E_v$.  Then $\HSaux_v := \set{\ul
        \eta(v)=(\eta,\dots,\eta) \in \HSaux_v^{\max}}{\eta \in
        \HSaux_{v,0}}$ is a closed subspace (i.e., $\HSaux_v$ consists
      of the $(\deg v)$-fold diagonal of the model space
      $\HSaux_{v,0}$).  In the above two examples, we treated the case
      $\HSaux_{e,v}=\C$.  A vector $\ul \eta(v)$ is characterised by
      $\eta(v)$ and the projection $\pi_v \ul \psi(v)=(\psi_e(v))_{e
        \in E_v}$ is characterised by the sum $(\deg v)^{-1}\sum_{e
        \in E_v} \psi_e(v)$.  Hence we can write the \DtN operator
      exactly as in~\eqref{eq:dtn.ed-coupling}.  This formula is a
      vector-valued version of a normalised discrete Laplacian
      (see~\eqref{eq:dtn.vec.q-graph} for a more concrete formula).
  \end{enumerate}
\end{examples}

\subsection{Vector-valued quantum graphs}
\label{sec:vect-q-graphs}

Let $I=[a,b]$ be a compact interval of length $\ell=b-a>0$, and let
$\anHS$ be a Hilbert space with non-negative closed quadratic form
$\qf k \ge 0$ such that its corresponding operator has purely discrete
spectrum.  We set $\HS := \Lsqr{I,\anHS} \cong \Lsqr I \otimes \anHS$
and define an energy form via
\begin{equation*}
  \qf h(f) 
  := \int_I \bigl(\normsqr[\anHS]{f'(t)} + \qf k(f(t)) \bigr)\dd t
\end{equation*}
for $f \in \Lsqr{I,\anHS}$ such that $f \in \Cont[1]{I,\anHS}$ and
$f(t) \in \dom \qf k$ for almost all $t \in I$.  Denote by $\qf h$
also the closure of this form.  As boundary space set $\HSaux := \anHS
\oplus \anHS \cong \anHS \otimes \C^2$ and define $\Gamma f :=
(f(a),f(b))$.  It is not hard to see that $\Pi=(\Gamma,\HSaux,\qf h,
\dom \qf h, \HS)$ is an elliptically regular \aBVP with dense
Dirichlet domain; moreover, the norm of $\Gamma$ is bounded by
$\sqrt{\coth(\ell/2)}$ (see e.g.~\cite[Sec.~6.1 and~6.4]{post:15}).
We call $\Pi$ the \emph{\aBVP associated with $(\qf k,\anHS,I)$}.
Moreover, as the underlying space of $\Pi$ has a product structure, we
can calculate all derived objects explicitly.  For example, the \DtN
operator $\Lambda(z)$ is an operator function of a matrix with respect
to the decomposition $\HSaux =\anHS \oplus \anHS$.  In particular, we
have $\Lambda(z) = \Lambda_0(z-K)$, where $K$ is the operator
associated with $\qf k$ and where
\begin{equation}
  \label{eq:dtn.matrix}
  \Lambda_0(z)
  = \frac {\sqrt z}{\sin(\ell \sqrt z)}
  \begin{pmatrix} 
    \cos(\ell \sqrt z) & -1\\
    -1 & \cos(\ell \sqrt z)
  \end{pmatrix}
\end{equation}
is the \DtN operator for the scalar problem ($\anHS=\C$)
\begin{equation*}
  \Pi_0=\bigl(\Gamma_0,\C^2,\qf h_0,\Sob I,\Lsqr I\bigr)
\end{equation*}
with $\Gamma_0f=(f(a),f(b))$ and $\qf h_0(f)=\int_I \abssqr{f'(t)} \dd
t$ (see e.g.~\cite{behrndt-post:pre16} for details).  The complex
square root is cut along the positive real axis.  The same argument
also works for abstract warped products.

Let us now consider vector-valued quantum graphs: Assume that
$G=(V,E,\bd)$ is a discrete graph and that $I_e$ is a closed interval
of length $\ell_e$ for each $e \in E$.  Assume that there is a Hilbert
space $\anHS_e$ and energy form $\qf k_e$ for each edge $e \in E$.
Then we can edge-couple the family of \aBVP $\Pi_e$ associated with
$(\qf k_e, \anHS_e, I_e$).  In order to formulate the next result, we
define the \emph{unoriented} and \emph{oriented evaluation} of $f$ at
a vertex $v$ and edge $e$ by
\begin{equation*}
  f_e(v) 
  =
  \begin{cases}
    f_e(\min I_e),& v=\bd_-e\\
    f_e(\max I_e),& v=\bd_+e
  \end{cases}
  \quadtext{and}
  \orient f_e(v) 
  =
  \begin{cases}
    -f_e(\min I_e),& v=\bd_-e\\
    +f_e(\max I_e),& v=\bd_+e
  \end{cases}
\end{equation*}
for $f \in \bigoplus_{e \in E} \Sob{I_e,\anHS_e}$.
\begin{theorem}
  \label{thm:vect.q-graph}
  Let $\Pi_e=(\Gamma_e,\anHS_e \oplus \anHS_e,\qf h_e, \dom \qf h_e,
  \Lsqr{I_e,\anHS_e})$ be an \aBVPs associated with $(\qf k_e,
  \anHS_e, I_e)$.  Assume that the length $\ell_e$ of $I_e$ fulfils
    \begin{equation}
      \label{eq:lower.bd}
      0 < \ell_0 := \inf_{e \in E}\ell_e.
    \end{equation}
    \begin{subequations}
      \begin{enumerate}
      \item
        \label{vect.q-graph.a}
        Then the family $(\Pi_e)_{e \in E}$ allows an edge coupling.
        As boundary space we choose $\HSaux=\bigoplus_{v \in
          V}\HSaux_v$, where $\HSaux_v$ is a closed subspace of
        $\HSaux_v^{\max}:=\bigoplus_{e \in E_v}\anHS_e$ for each $v \in
        V$.  Then $H$ acts as
        \begin{equation}
          \label{eq:vect.q-graph.op}
          (Hf)_e(t) =-f_e''(t) + K_e f(t)
        \end{equation}
        on each edge, where $K_e$ is the operator associated with $\qf
        k_e$.  Moreover, a function $f$ in the domain of the Neumann
        operator $\HNeu$ of the edge-coupled \aBVP fulfils
        \begin{equation}
          \label{eq:vect.q-graph.dom}
          \ul f(v) 
          =\bigl(f_e(v)\bigr)_{e \in E_v} \in \HSaux_v
          \quadtext{and}
          \orul f'(v)
          =\bigl(\orient f_e(v)\bigr)_{e \in E_v} \in 
          \HSaux_v^{\max} \ominus \HSaux_v.
        \end{equation}
      \item
        \label{vect.q-graph.b}
        If $H$ is a self-adjoint operator in $\HS=\bigoplus_{e \in
          E}\Lsqr{I_e,\anHS_e}$ such that~\eqref{eq:vect.q-graph.op}
        holds for all functions $f=(f_e)_e \in \dom H$ such that $f_e
        \in \Cont[2]{I_e,\dom K_e}$ vanishing near $\bd I_e$, and such
        that the values $\ul f(v)$ and $\orul f'(v)$ are not coupled
        in $\dom H$, then there exist closed subspaces $\HSaux_v$ of
        $\HSaux_v^{\max}$ for each $v \in V$, such that $\dom H$ is of
        the form~\eqref{eq:vect.q-graph.dom}
      \end{enumerate}
    \end{subequations}
\end{theorem}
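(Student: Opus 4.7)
For part (a), the plan is to verify the hypotheses of \Def{ed-coupling} and then apply \Thm{ed-coupling}. The uniform bound $\sup_{e \in E}\norm{\Gamma_e} < \infty$ follows from the stated estimate $\norm{\Gamma_e} \le \sqrt{\coth(\ell_e/2)}$ together with~\eqref{eq:lower.bd}, while the decomposition $\HSaux_e = \anHS_e \oplus \anHS_e = \HSaux_{e,\bd_-e} \oplus \HSaux_{e,\bd_+e}$ with $\Gamma_{e,v}f$ being the evaluation at the endpoint of $I_e$ corresponding to $v$ is built into the construction of the single-edge \aBVP. The Neumann operator of $\Pi$ then acts on each edge interior as the single-edge Neumann operator, which is $-\partial_t^2 + K_e$ by a direct computation from the form $\qf h_e$. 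For the vertex conditions, edge-wise integration by parts identifies the second boundary map as $\Gamma_e' f = (-f_e'(\min I_e), +f_e'(\max I_e))$, which assembles vertex-wise into $\orul f'(v)$; the domain description from \Thmenum{ed-coupling}{ed-coupling.c'}, namely $\Gamma f \in \HSaux$ and $\Gamma' f \in \HSaux^{\max}\ominus\HSaux$, then reads exactly as~\eqref{eq:vect.q-graph.dom}.

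For part (b), the strategy is to reconstruct the vertex subspaces $\HSaux_v$ from $\dom H$ via a Lagrangian-subspace description of self-adjoint extensions. Let $\Hmin$ denote the symmetric operator with the prescribed action on functions of the form assumed in the hypothesis and supported away from the vertices; by assumption, $H$ is a self-adjoint extension of $\Hmin$. Its adjoint is a maximal operator whose domain carries the symmetric boundary form
\[
 \omega(f,g)
 := \iprod[\HS]{\Hmax f}{g} - \iprod[\HS]{f}{\Hmax g}
 = \sum_{v \in V}\Bigl(
     \bigiprod[\HSaux_v^{\max}]{\ul f(v)}{\orul g'(v)}
   - \bigiprod[\HSaux_v^{\max}]{\orul f'(v)}{\ul g(v)}
  \Bigr),
\]
obtained by vertex-wise integration by parts. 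Self-adjointness of $H$ is equivalent to $\dom H$ modulo $\dom \Hmin$ being a Lagrangian (i.e.\ maximal isotropic) subspace $L$ of $\omega$. The decoupling hypothesis says that $L$ splits vertex-wise as $L = \bigoplus_v L_v$ and that each local factor has Cartesian product form $L_v = W_v \times W_v'$ for closed subspaces $W_v, W_v' \subset \HSaux_v^{\max}$ consisting of admissible value vectors and admissible signed-derivative vectors at $v$. Isotropy forces $W_v \perp W_v'$ in $\HSaux_v^{\max}$, and Lagrangian maximality then upgrades this to $W_v' = \HSaux_v^{\max} \ominus W_v$. Setting $\HSaux_v := W_v$ yields the required closed subspaces; part (a) applied to this choice produces a self-adjoint operator with the same boundary data as $H$, hence equal to $H$.

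The main obstacle is making the ``no coupling'' hypothesis rigorous and extracting the Cartesian product structure $L_v = W_v \times W_v'$ from it. In particular, one must rule out hidden mixed linear relations between $\ul f(v)$ and $\orul f'(v)$ whose projections onto values and onto derivatives happen to look independent. Once this Cartesian splitting is secured, the identification $W_v' = \HSaux_v^{\max}\ominus W_v$ follows from a standard Lagrangian maximality argument for symplectic-type forms in Hilbert space, and the reconstruction of $H$ from $(\HSaux_v)_{v \in V}$ via part (a) is routine.
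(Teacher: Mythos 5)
Your part~(\ref{vect.q-graph.a}) is essentially the paper's proof: verify the two conditions of \Def{ed-coupling} (uniform boundedness of the $\Gamma_e$ from~\eqref{eq:lower.bd}, endpoint splitting of $\HSaux_e$), apply \Thm{ed-coupling}, and identify $\Gamma'_e$ by edge-wise integration by parts; the paper records the bound $\normsqr{\Gamma_e}\le 2/\min\{\ell_0,1\}$ rather than the $\coth$ estimate, which is immaterial.

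For part~(\ref{vect.q-graph.b}) you take a genuinely different and much heavier route, and its central step has a gap in this vector-valued setting. The paper never leaves $\dom H$: it applies Green's identity only to pairs $f,g\in\dom H$, uses $\iprod[\HS]{Hf}g=\iprod[\HS]f{Hg}$ to make the summed boundary terms vanish, invokes the no-coupling hypothesis to conclude that $\iprod[\HSaux_v^{\max}]{\orul f'(v)}{\ul g(v)}$ and $\iprod[\HSaux_v^{\max}]{\ul f(v)}{\orul g'(v)}$ vanish separately, and then simply \emph{defines} $\HSaux_v$ as the closed linear span of $\set{\ul f(v)}{f\in\dom H}$, after which $\orul f'(v)\in\HSaux_v^{\max}\ominus\HSaux_v$ is immediate. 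In particular, the obstacle you flag as unresolved (extracting the product structure from ``not coupled'') is exactly what the paper takes to be the \emph{meaning} of that hypothesis, so that is not where the real difficulty lies. The genuine gap is your claimed equivalence between self-adjointness of $H$ and $L:=\dom H/\dom\Hmin$ being a Lagrangian (maximal isotropic) subspace of $\omega$ inside $\HSaux^{\max}\oplus\HSaux^{\max}$. That correspondence presupposes a boundary triple, i.e.\ that the trace maps $f\mapsto(\ul f,\orul f')$ are well defined and surjective from $\dom\Hmax$ onto $\HSaux^{\max}\oplus\HSaux^{\max}$. Here, with unbounded fibre operators $K_e$, traces of functions in the maximal domain do \emph{not} in general lie in $\anHS_e$ (solutions of $-f''+K_ef=0$ have boundary values only in larger, negative-order spaces), so $\omega$ as you write it is not even defined on $\dom\Hmax/\dom\Hmin$; and without surjectivity of the traces, self-adjointness of $H$ does not yield maximal isotropy of $L$ in the abstract boundary space --- it only excludes isotropic enlargements realized by actual functions. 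Making this rigorous would require a regularized (quasi-)boundary triple, precisely the machinery the paper's elementary argument avoids. Note finally that the paper claims and proves only the inclusion --- every $f\in\dom H$ satisfies~\eqref{eq:vect.q-graph.dom} for the constructed $\HSaux_v$ --- whereas your concluding step ($H$ equals the operator from part~(\ref{vect.q-graph.a}), hence exact equality of domains) additionally needs $\dom H\subseteq\bigoplus_{e}\WS_e$, which you do not justify either.
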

We call $\HNeu$ the \emph{vector-valued quantum graph Laplacian with vertex spaces $\HSaux_v$ and fibre operators $K_e$}.
\begin{proof}
  Part~\itemref{vect.q-graph.a} follows already from the discussion
  above, the fact that $\Gamma f = (\ul f(v))_{v \in V}$ and $\Gamma'
  f = (\orul f'(v))_{v \in V}$, and \Thm{ed-coupling}.  Note that
  $\normsqr{\Gamma_e}$ is bounded by $2/\min\{\ell_0,1\}$, see
  \cite[Cor.~A.2.12]{post:12}.

  For part~\itemref{vect.q-graph.b}, partial integration shows that
  \begin{align*}
    \iprod[\HS]{Hf} g
    &= \sum_{e \in E} \int_{I_e} 
    \iprod[\anHS_e]{-f_e''(t)+ K_ef_e(t)}{g_e(t)} \dd t\\
    &= \sum_{e \in E} 
    \Bigl(
      \int_{I_e} 
         \bigl( \iprod[\anHS_e]{f'_e(t)}{g_e'(t)}
              + \qf k_e(f_e(t),g_e(t)) 
         \bigr) 
      \dd t
      - \Bigl[\iprod[\anHS_e]{f_e'(t)}{g_e(t)} \Bigr]_{\bd I_e}
    \Bigr)\\
    &= \iprod[\HS]f {Hg}
     + \sum_{e \in E} 
     \Bigl[ \iprod[\anHS_e]{f_e(t)}{g_e'(t)}
           -\iprod[\anHS_e]{f_e'(t)}{g_e(t)}  \Bigr]_{\bd I_e}
  \end{align*}
  for $f,g \in \dom H$.  Reordering the boundary contributions (the
  last sum over $e \in E$) gives
  \begin{align*}
    \sum_{v \in V} \sum_{e \in E_v} 
      \bigl(
       \iprod[\anHS_e]{\orient f_e'(v)}{g_e(v)}
      -\iprod[\anHS_e]{f_e(v)}{\orient g_e'(v)}
      \bigr)\\
    =\sum_{v \in V} 
      \bigl(
        \iprod[\HSaux_v^{\max}]{\orul f'(v)}{\ul g(v)}
       -\iprod[\HSaux_v^{\max}]{\ul f(v)}{\orul g'(v)}
      \bigr),
  \end{align*}
  As the values $\orul f'(v)$ and $\ul f(v)$ resp.\ $\orul g'(v)$ and
  $\ul g(v)$ are not coupled, each contribution
  $\iprod[\HSaux_v^{\max}]{\orul f'(v)}{\ul g(v)}$ and
  $\iprod[\HSaux_v^{\max}]{\ul f(v)}{\orul g'(v)}$ has to vanish
  separately.  We let $\HSaux_v$ be the closure of the linear span of
  all boundary values $\ul f(v)$, $f \in \dom H$.  In particular, we
  then have $\orul f'(v), \orul g'(v) \in \HSaux_v^\perp$.
\end{proof}
\begin{remarks}
  \indent
  \begin{enumerate}
    \item For simplicity, we describe only the \emph{energy
      independent} vertex conditions, not involving any condition
      between the values $\ul f(v)$ and $\orul f'(v)$.  One can, of
      course, also consider Robin-type conditions, but one needs
      additional finiteness or boundedness conditions in this case.
    \item If $\anHS_e=\C$ for all edges $e \in E$, then we have
      defined an ordinary quantum graph.  For the case $(\qf k_e,
      \anHS_e, I_e)=(0,\anHS_0,[0,1])$ for all $e \in E$, where
      $\anHS_0$ is a given Hilbert space,
      see~\cite{von-below-mugnolo:13}.
  \end{enumerate}
\end{remarks}

We have not used the whole power of \aBVPs here, namely the resolvent
formula~\eqref{eq:krein-res}.  In this setting, the left hand side,
the resolvent of $\HNeu$ in $z \notin \spec \HNeu \cup \spec \HDir$,
equals the right hand side, which can be expressed completely in terms
of the building blocks $\Pi_e$.  Moreover, the \DtN operator has the
nature of a discrete Laplacian.

If we use standard vertex conditions (see
\Exenum{disc-graph-ed}{synchronous}), we have to assume that all
boundary spaces $\anHS_e$ are the same and equal (or can at least be
naturally identified with) a Hilbert space $\anHS_0$.  Then we set
$\HSaux_v=\anHS_0(1,\dots,1)$, i.e., all $\deg v$ components of $\ul
\phi(v) \in \HSaux_v$ are the same.  In this case, the \DtN operator
is (see~\eqref{eq:dtn.ed-coupling} and~\eqref{eq:dtn.matrix})
\begin{equation}
  \label{eq:dtn.vec.q-graph}
  (\Lambda(z) \phi)(v)
  = \frac 1 {\deg v} \sum_{e \in E_v} 
  \bigl(C_e(z)\phi(v)-S_e(z)\phi(v_e)\bigr)
\end{equation}
where $C_e(z)=\sqrt{z-K_e}\cot(\ell_e \sqrt{z-K_e})$ and
$S_e(z)=\sqrt{z-K_e}/\sin(\ell_e \sqrt{z-K_e})$.

\myparagraph{The equilateral and standard (or \emph{Kirchhoff}) case.}
Let us now characterise the spectrum of the vector-valued quantum
graph Laplacian $\HNeu$ in a special case:
\begin{example} 
  If all \aBVPs $\Pi_e$ are the same (or isomorphic) and all lengths
  $\ell_e$ are the same (say, $\ell_e=1$), then we call the
  vector-valued quantum graph \emph{equilateral} and $K_e=K_0$ on
  $\anHS_e=\anHS_0$ for all $e \in E$).  A related case ($K_0=0$) has
  also been treated in~\cite{von-below-mugnolo:13}.  In the
  equilateral case, we have
  \begin{equation}
    \label{eq:dtn.equilateral}
    \Lambda(z)
    =
    \bigl(1 \otimes (1/\sin\sqrt{z-K_0})\bigr)
    \Bigl(
    1 \otimes \cos\sqrt{z-K_0}-1 
    + \laplacian G \otimes 1
    \Bigr)
  \end{equation}
  where $\laplacian G$ denotes the (discrete) normalised Laplacian
  (see~\eqref{eq:normalised.lapl}) and where we have identified
  $\HSaux \cong \lsqr {V,\deg} \otimes \anHS_0$.  Since $1 \otimes
  (1/\sin\sqrt{z-K_0})$ is a bijective operator and $\spec{A \otimes
    1-1 \otimes B}=\spec A-\spec B=\set{a-b}{a\in \spec A, b \in \spec
    B}$, we have in particular (using~\eqref{eq:spec-rel} for the
  first equivalence)
  \begin{align*}
    \lambda \in \spec \HNeu
    \iff
    0 \in \spec {\Lambda(z)}
    &\iff
    0 \in \bigspec {1-\cos\sqrt{z-K_0}}-\spec{\laplacian G}\\ 
    &\iff
    \bigspec {1-\cos\sqrt{z-K_0}} 
    \cap \spec{\laplacian G} \ne \emptyset\\
    &\iff
    \exists \kappa \in \spec {K_0} \colon\;
    1-\cos\sqrt{z-\kappa} \in \spec{\laplacian G}
  \end{align*}
  provided $z \notin \spec \HDir= \spec{\HDir_{[0,1]} \otimes 1 +
    1 \otimes K_e} = \set{(n\pi)^2+\kappa}{n=1,2,\dots, \kappa \in
    \spec {K_0}}$.
\end{example}

We have therefore shown the following:
\begin{corollary}
  \label{cor:equilateral}
  Assume that all edge \aBVPs $\Pi_e$ are the same, i.e., associated
  with $(\qf k_0, \anHS_0, [0,1])$ (see the beginning of
  \Sec{vect-q-graphs}), and that all vertex spaces are standard
  ($\HSaux_v=\set{(\phi,\dots,\phi) \in \anHS_0^{\deg v}}{ \phi\in
    \anHS_0}$).  Then the \DtN operator is given
  by~\eqref{eq:dtn.equilateral}.  Moreover, the spectrum of the
  vector-valued quantum graph Laplacian $\HNeu$ is characterised by
  \begin{equation*}
    \lambda \in \spec \HNeu
    \quad\iff\quad
    \exists \kappa \in \spec {K_0} \colon\;
    1-\cos\sqrt{\lambda-\kappa} \in \spec{\laplacian G}
  \end{equation*}
  provided $\lambda \notin \spec
  \HDir=\set{(n\pi)^2+\kappa}{n=1,2,\dots; \kappa \in \spec {K_0}}$.
\end{corollary}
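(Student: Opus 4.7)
The plan is to reduce the statement to two separate calculations: (i) derive the formula \eqref{eq:dtn.equilateral} for $\Lambda(z)$, and (ii) translate the spectral relation \eqref{eq:spec-rel} into the claimed equivalence using the tensor-product structure. The computation of $\spec \HDir$ is a by-product of \Thm{ed-coupling}.

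First, I would specialise the general edge-coupled \DtN formula \eqref{eq:dtn.vec.q-graph}. Since all edges carry the same triple $(\qf k_0,\anHS_0,[0,1])$, the operators $C_e(z)=\sqrt{z-K_0}\cot\sqrt{z-K_0}$ and $S_e(z)=\sqrt{z-K_0}/\sin\sqrt{z-K_0}$ are independent of $e$ and act on the common fibre $\anHS_0$. Extracting $\phi(v)$ from the sum and using the identity $\phi(v)-\frac{1}{\deg v}\sum_{e\in E_v}\phi(v_e) = (\laplacian{G}\phi)(v)$ yields
\begin{equation*}
  (\Lambda(z)\phi)(v)
  = \frac{\sqrt{z-K_0}}{\sin\sqrt{z-K_0}}
    \bigl[(\cos\sqrt{z-K_0}-1)\phi(v) + (\laplacian G\phi)(v)\bigr],
\end{equation*}
which, under the natural identification $\HSaux \cong \lsqr{V,\deg}\otimes \anHS_0$, is exactly the operator \eqref{eq:dtn.equilateral} (up to the harmless bijective prefactor $\sqrt{z-K_0}$).

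Next, I would compute the Dirichlet spectrum. By \Thmenum{ed-coupling}{ed-coupling.b} the Dirichlet operator is decoupled, $\HDir=\bigoplus_{e\in E}\HDir_e$, and each $\HDir_e$ is the Dirichlet operator of the \aBVP associated with $(\qf k_0,\anHS_0,[0,1])$. Since this \aBVP has product structure, $\HDir_e$ acts as $-\partial_t^2\otimes 1 + 1\otimes K_0$ with Dirichlet boundary conditions, so its spectrum is $\set{(n\pi)^2+\kappa}{n\ge 1,\;\kappa\in\spec{K_0}}$ by the tensor sum rule. This spectrum is independent of $e$, hence coincides with $\spec\HDir$.

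Finally, for $\lambda\notin\spec\HDir$, the prefactor $\sqrt{\lambda-K_0}/\sin\sqrt{\lambda-K_0}$ is bijective (its only possible singularities are in $\spec\HDir$), so the spectral characterisation \eqref{eq:spec-rel} gives
\begin{equation*}
  \lambda\in\spec{\HNeu}
  \iff 0\in\bigspec{\laplacian G\otimes 1-1\otimes(1-\cos\sqrt{\lambda-K_0})}.
\end{equation*}
Applying the standard tensor-spectrum identity $\spec(A\otimes 1-1\otimes B)=\spec A-\spec B$ for self-adjoint $A$ and $B$ (well-defined since $\laplacian G$ is bounded self-adjoint on $\lsqr{V,\deg}$ and $1-\cos\sqrt{\lambda-K_0}$ is a bounded function of the self-adjoint $K_0$), the right-hand side becomes the intersection $\spec{\laplacian G}\cap\set{1-\cos\sqrt{\lambda-\kappa}}{\kappa\in\spec{K_0}}\neq\emptyset$, which is the claimed condition. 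The main obstacle is justifying the tensor-spectrum identity when $K_0$ is unbounded; this is handled cleanly by the hypothesis that $K_0$ has purely discrete spectrum, which lets us diagonalise $K_0$ and reduce to a direct sum over $\kappa\in\spec{K_0}$ of bounded operators of the form $\cos\sqrt{\lambda-\kappa}-1+\laplacian G$ on $\lsqr{V,\deg}$.
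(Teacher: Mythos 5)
Your proposal is correct in substance and follows the same route as the paper's own argument (the equilateral example immediately preceding the corollary): specialise the edge-coupled \DtN formula \eqref{eq:dtn.vec.q-graph} to identical edges of length $1$, identify the discrete Laplacian via $(\laplacian G\phi)(v)=\phi(v)-\frac1{\deg v}\sum_{e\in E_v}\phi(v_e)$, compute $\spec\HDir$ from the decoupling in \Thmenum{ed-coupling}{ed-coupling.b}, and combine \eqref{eq:spec-rel} with the tensor structure of $\Lambda(z)$. Your version of the \DtN formula is in fact the more accurate one: the prefactor is $\sqrt{z-K_0}/\sin\sqrt{z-K_0}$, whereas \eqref{eq:dtn.equilateral} as printed drops the factor $\sqrt{z-K_0}$ (harmless for the spectral conclusion, but your form avoids a spurious singularity at $z\in\spec{K_0}$).

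One justification, however, is wrong as written: $1-\cos\sqrt{\lambda-K_0}$ is \emph{not} a bounded function of $K_0$, since for $\kappa\in\spec{K_0}$ with $\kappa>\lambda$ one has $\cos\sqrt{\lambda-\kappa}=\cosh\sqrt{\kappa-\lambda}\to\infty$; so the ``standard tensor-spectrum identity for bounded operators'' cannot be invoked directly, and for unbounded operators the identity only holds with a closure on the right-hand side. Your diagonalisation of $K_0$ is the correct repair, but it needs one more line that you (and, to be fair, the paper) leave implicit: the spectrum of an infinite direct sum is only the \emph{closure} of the union of the block spectra, and the prefactor you factor out has unbounded inverse (since $\sqrt{\kappa-\lambda}/\sinh\sqrt{\kappa-\lambda}\to0$), so removing it is not automatically spectrum-preserving. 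Both issues disappear simultaneously by observing that, for all but the finitely many $\kappa\le\lambda$, the $\kappa$-block of $\Lambda(\lambda)$ has spectrum at distance at least $\sqrt{\kappa-\lambda}\,\tanh\bigl(\tfrac12\sqrt{\kappa-\lambda}\bigr)\to\infty$ from $0$, because $\spec{\laplacian G}\subset[0,2]$ while $1-\cos\sqrt{\lambda-\kappa}\to-\infty$; hence only finitely many blocks can contribute spectrum near $0$, the closure is irrelevant there, and $0\in\spec{\Lambda(\lambda)}$ indeed forces $0\in\spec{\laplacian G-(1-\cos\sqrt{\lambda-\kappa})}$ for some single $\kappa$, which is exactly the ``exists $\kappa$'' form of the corollary.
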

Molchanov and Vainberg~\cite{molchanov-vainberg:06} treated the
asymptotic behaviour $\eps \to 0$ of a Dirichlet Laplacian on the
product $X \times (M,g_\eps)$, where $X$ is a metric graph, $(M,g)$ is
a compact Riemannian manifold with boundary and $g_\eps=\eps^2g$.  In
our notation, it means that $\anHS_0=\Lsqr{M,g_\eps}$ and
$K_0=\laplacian{(M,g_\eps)}=\eps^{-2}\laplacian{(M,g)}$ with Dirichlet
boundary conditions.  It would be interesting to compare this model
with the usual $\eps$-tubular neighbourhood model with Dirichlet
boundary conditions.  Our methods allow such an analysis, see
\Sec{conv-graph-like}.

\subsection{Vertex-edge coupling}
\label{sec:vx-ed-coupling}
Here, we treat the coupling when there are building blocks for each
vertex-edge of a given graph $G=(V,E,\bd)$.  Formally this
coupling is just a \emph{vertex-based coupling} for the corresponding
\emph{subdivision graph} $SG=(A,B,\wt \bd)$ of $G$ (see
\Def{subdivision}).  Assume that $\Pi_a$ is an \aBVP for each vertex
$a \in A=V \dcup E$ of the subdivision graph.  The family $(\Pi_a)_{a
  \in A}$ allows a vertex-edge coupling if the following holds:

We say that the family of \aBVPs $(\Pi_v)_{v \in V}$ allows a
\emph{vertex-edge coupling}, if the following holds:
\begin{enumerate}
\item We assume that $\sup_{a \in A} \norm{\Gamma_a} < \infty$.

\item Assume there is a Hilbert space $\HSaux_{e,v}$ for each edge $v
  \in V$ and $e \in E_v$ (i.e., each edge $b=(v,e)$ of the subdivision
  graph).
  
  For the vertex vertices of $SG$ assume that there is a bounded
  operator $\map{\pi_{v,e}}{\HSaux_v}{\HSaux_{e,v}}$.  We set
  $\map{\Gamma_{v,e}:=\pi_{v,e} \Gamma_v}{\HS^1_v}
  {\HSaux_{e,v}}$.  Moreover, let $\HSaux_v^{\max}:=
  \bigoplus_{e \in E_v} \HSaux_e$ and
  $\map{\iota_v}{\HSaux_v}{\HSaux_v^{\max}}$, $\iota_v
  \phi_v=(\pi_{v,e} \phi_v)_{e \in E_v}$.  We assume that $\iota_v$ is
  an isometric embedding.

  For the edge vertices of $SG$ we assume that $\HSaux_e =
  \bigoplus_{v=\bd_\pm e} \HSaux_{e,v}$ (i.e., the vertex coupling is
  maximal at $e \in A$).  We set
  $\map{\Gamma_{e,v}}{\HS^1_e}{\HSaux_{e,v}}$, $\Gamma_{e,v} f_e
  := (\Gamma_e \phi_e)_v$.

\item For each edge $(v,e) \in B$, assume that $\ran \Gamma_{v,e}=\ran
  \Gamma_{e,v}=: \HSaux_{e,v}^{1/2}$.
\end{enumerate}

The formulas for the (subdivision) vertex-coupled \aBVP can be taken
from \Sec{vx-coupling} verbatim.  Let us give two typical examples of
such couplings:

\myparagraph{Vertex-edge coupling with maximal coupling space:
  shrinking graph-like manifolds.}
Consider a thin neighbourhood $X=X_\eps$ of an embedded metric graph
$X_0$ or a thin graph-like manifold of dimension $d \ge 2$ and
decompose $X=X_\eps$ into its closed vertex and edge neighbourhoods
$X_v=X_{\eps,v}$ and $X_e=X_{\eps,e}$, respectively (see
e.g.~\cite{exner-post:05,exner-post:09} or~\cite{post:12}).  We omit
the shrinking parameter $\eps>0$ in the sequel, as it does not affect
the coupling.

The \aBVPs $\Pi_v$ and $\Pi_e$ are the ones for $X_v$ and $X_e$ with
(internal) boundary $\bd X_v$ and $\bd X_e$ given as follows: Let
$Y_{e,v}:= X_e \cap X_v$ and assume that $Y_{e,v}$ is isometric with a
smooth $(d-1)$-dimensional manifold $Y_e$ for $v=\bd_\pm e$.  The
(internal) boundary of $X_v$ and $X_e$ is now $\bd X_v=\bigcup_{e \in
  E_v} (X_v \cap X_e)$ and $\bd X_e = \bigcup_{v = \bd_\pm e} (X_v
\cap X_e)$, respectively.  We also assume that $X_e$ is a product $I_e
\times Y_e$ with $I_e=[0,\ell_e]$.

The boundary spaces for each edge $b=(v,e) \in B$ are
$\HSaux_{e,v}=\Lsqr{Y_{e,v}} \cong \Lsqr {Y_e}$.  Note that
$\HSaux_v = \bigoplus_{e \in E_v} \HSaux_{e,v}$, i.e., that the vertex
coupling at the (original) vertices $v \in A$ is also maximal.

Under the typical uniform lower positive bound~\eqref{eq:lower.bd} and
a suitable decomposition of $X$ into $X_v$ and $X_e$, one can show
that $\norm {\Gamma_a}$ is uniformly bounded.  The other conditions
above can also be seen easily.

If we consider again the shrinking parameter and if we assume that
$X_{\eps,v}$ is $\eps$-homothetic (see example before
\Prp{triv.conv}), then (depending on the energy form and boundary
conditions chosen on $X$ if the external boundary $\bd X$ is
non-trivial), we can apply \Prp{triv.conv}.  \emph{Note that the
  boundary values of the eigenfunction on $X_v$ corresponding to the
  eigenvalue $0$ determine the limit boundary space, i.e., the vertex
  coupling appearing below as a proper subspace of $\HSaux_v^{\max}$.}

\myparagraph{Vertex-edge coupling with non-maximal coupling
  space: trivial vertex \aBVPs.}

Let us now construct another vertex-edge coupled \aBVP appearing e.g.\
in the limit situation of a shrinking graph-like space:

Assume that $(\Pi_e)_{e \in E}$ allows an edge coupling and that each
\aBVP $\Pi_e$ is bounded.  For each $v \in V$ choose a subspace
$\HSaux_v$ of $\HSaux_v^{\max}$ (e.g.\ given as the boundary values of
the eigenfunction corresponding to $0$ of a vertex neighbourhood).
For the vertex \aBVPs assume that $\Pi_v = \bigl(\id, \HSaux_v,
0,\HSaux_v, \HSaux_v \bigr)$, i.e., all $\Pi_v$'s are trivial (see
\Def{bd2}).

The corresponding vertex-edge-coupled \aBVP $\wt \Pi=(\wt
\Gamma,\HSaux,\wt{\qf h}, \wt \HS^1,\wt \HS)$ is given as follows: The
coupling condition in $\wt \HS^1$ here reads as
$\Gamma_{e,v}f_e=\Gamma_{v,e} f_v$ for all $e \in V_v$ and $v \in V$.
We define $\Gamma_v^\intl f := (\Gamma_{v,e} f_v)_{e \in E_v}$ and
$\Gamma_v^\extl f := (\Gamma_{e,v} f_e)_{e \in E_v}$, hence the
coupling condition becomes $\Gamma_v^\intl f = \Gamma_v^\extl f$.  As
$\Gamma_v^\intl f_v =(\Gamma_{v,e} f_v)_{e \in E_v}=\Gamma_v f_v=f_v
\in \HSaux_v$ the coupling condition is equivalent with
$f_v=\Gamma_v^\extl f \in \HSaux_v$.  Hence we have
\begin{align*}
  \wt \HS&=\bigoplus_{e \in E} \HS_e \oplus \bigoplus_{v \in V} \HSaux_v,
  \qquad
  \HSaux = \bigoplus_{v \in V} \HSaux_v,\\
  \wt \HS^1&=\bigset{f=(f_a)_{a \in E \dcup V} \in \wt \HS^{1,\dec}}
     {\Gamma_v^\extl f =f_v \in \HSaux_v \; \forall v \in V}\\
     \wt{\qf h} &= \wt{\qf h}^{\dec} \restr {\HS^1},\; \text{i.e.},\; 
     \wt{\qf h}(f)=\sum_{e \in E}\qf h_e(f_e), \qquad
     \wt \Gamma f = (\Gamma_v^{\extl} f)_{v \in V}
\end{align*}
(the $(\cdot)^\dec$-labelled objects are defined as in \Sec{dir-sum}).
Such operators have been treated in~\cite[Sec.~3.4.4]{post:12} under
the name ``\emph{extended operator}''.  

We have the following result, showing that the vertex-edge
coupling with trivial vertex \aBVPs leads just to the edge coupling:
\begin{theorem}
  \label{thm:vx-ed-non-max}
  Let $G$ be a discrete graph.  Assume that $\Pi$ is an \aBVP obtained
  from a family $(\Pi_e)_{e \in E}$ of \aBVPs allowing an edge
  coupling and choose a closed subspace $\HSaux_v$ of
  $\HSaux_v^{\max}=\bigoplus_{e \in E_v} \HSaux_{e,v}$ for each
  vertex.  As vertex family $(\Pi_v)_{v \in V}$ we choose the trivial
  \aBVP $\Pi_v=(\id,\HSaux_v,0,\HSaux_v,\HSaux_v)$ for each vertex.

  Then the vertex-edge-coupled \aBVP $\wt \Pi$ (i.e., vertex-coupled
  with respect to the subdivision graph $SG$) is equivalent (see
  below) with the edge-coupled \aBVP $\Pi$ according to the original
  graph $G$.
\end{theorem}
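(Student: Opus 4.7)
The plan is to exhibit a canonical bijection between $\wt \HS^1$ and $\HS^1$ that intertwines all structural data of the two \aBVPs. Define $\map \Phi {\HS^1}{\wt \HS^1}$ by $\Phi(f) = \bigl((f_e)_e, (\Gamma_v^\extl f)_v\bigr)$. This is well-defined since the constraint $f_v = \Gamma_v^\extl f \in \HSaux_v$ characterising $\wt \HS^1$ is exactly the condition $\Gamma f \in \HSaux$ characterising $\HS^1$ in the edge-coupled construction of \Sec{ed-coupling}. The inverse $\Phi^{-1}$ simply forgets the vertex components and lands in $\HS^1$ for the same reason.

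Under $\Phi$ the two forms agree: $\wt{\qf h}(\Phi f)=\sum_e \qf h_e(f_e)=\qf h(f)$, since the trivial vertex \aBVPs contribute zero to the form. The boundary maps also match, $\wt \Gamma(\Phi f) = (\Gamma_v^\extl f)_v = \Gamma f$, and the auxiliary spaces coincide on the nose, $\wt \HSaux=\HSaux=\bigoplus_v \HSaux_v$. From these two identifications all derived objects follow at once: $\ker \wt \Gamma$ corresponds via $\Phi^{-1}$ to $\ker \Gamma$, so the Dirichlet operators and their spectra agree and the Dirichlet solution operators satisfy $\wt S(z)\phi = \Phi(S(z)\phi)$ for $z\notin \spec \HDir$. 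A short computation of $\wt{\qf l}_z(\phi,\psi)=(\wt{\qf h}-z\wt{\qf 1})(\wt S(z)\phi,\wt S(-1)\psi)$, separating off the extra inner-product contribution coming from the vertex summand of $\wt \HS$, yields $\wt{\qf l}_z(\phi,\psi) = \qf l_z(\phi,\psi) - z\iprod[\HSaux]{\phi}{\psi}$, and hence $\wt \Lambda(z) = \Lambda(z) - z \id_{\HSaux}$, a harmless explicit shift.

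For the Hilbert-space level identification I would take $\map{J}{\HS}{\wt\HS}$ by $Jf=(f,0)$, $\map{J'}{\wt\HS}\HS$ by $J'(g_e,g_v)=(g_e)_e$, together with $J^1 = \Phi$, $J^{\prime 1}=\Phi^{-1}$ and $I=I'=\id_\HSaux$. The only real subtlety, and what ``equivalent'' in the statement must mean, is that $\wt \HS$ and $\HS$ are not isomorphic as Hilbert spaces --- the extra summand $\bigoplus_v \HSaux_v$ of $\wt \HS$ prevents a strict isomorphism of \aBVPs in the sense of \Ex{0-q-u-e}. The key observation that defuses this is that on $\wt \HS^1$ the vertex component is not a genuine degree of freedom but merely the graph of the boundary map; hence the form, boundary map, auxiliary space and all derived operators of the two \aBVPs match bijectively (up to the explicit shift above for the \DtN operator), so $\wt \Pi$ may be substituted for $\Pi$ as a building block in any further graph-like coupling. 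The main technical point to check carefully is thus not any estimate but the bookkeeping of how the constraint $f_v = \Gamma_v^\extl f$ identifies the apparent extra data of $\wt \Pi$ with the boundary data already present in $\Pi$.
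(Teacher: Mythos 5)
Your proposal is correct and takes essentially the same route as the paper: the paper's proof uses exactly your map $\Phi$ (called $U^1$ there), $f \mapsto (f,\Gamma^\extl f)$, together with $T=\id$ on the boundary spaces, and verifies $\wt{\qf h}(U^1 f)=\qf h(f)$ and $T\Gamma = \wt\Gamma U^1$. The only point the paper makes explicit that you leave implicit is the two-sided norm estimate showing $U^1$ is \emph{bicontinuous} (immediate from the boundedness of $\Gamma^\extl$); your additional observations on derived objects, e.g.\ $\wt\Lambda(z)=\Lambda(z)-z\id$, are correct but not required for the stated equivalence.
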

\begin{proof}
  Equivalence of two \aBVPs $\Pi$ and $\wt \Pi$ means that there are
  bicontinuous isomorphisms $\map{U^1}{\HS^1}{\wt \HS^1}$ and $\map T
  \HSaux {\wt \HSaux}$ such that $T \Gamma = \wt \Gamma U^1$ and
  $\wt {\qf h}(U^1f)=\qf h(f)$.  Here we have
  \begin{equation*}
    \HS=\bigoplus_{e \in E}\HS_e, \qquad
    \HS^1=\bigset{f \in \bigoplus_{e \in E} \HS^1_e}
      {\forall v \in V \colon \; \Gamma_v^\extl f \in \HSaux_v}
  \end{equation*}
  and $\wt \HS^1$ is given above. Set $U^1f=(f,(\Gamma_v^\extl f)_{v
    \in V})=:(f,\Gamma^\extl f)$ then $\wt {\qf h}(U^1f)=\qf h(f)$ and
  \begin{equation*}
    \normsqr[\wt \HS^1]{U^1f}
    = \wt{\qf h}(U^1f) + \normsqr[\HS] f 
     + \normsqr[\HSaux] {\Gamma^\extl f}
    = \normsqr[\HS^1] f + \normsqr[\HSaux] {\Gamma^\extl f}
    \le (1+\norm {\Gamma^\extl}) \normsqr[\HS^1] f
  \end{equation*}
  while for the inverse $(U^1)^{-1}(f,f_0)=f$ (with $f \in \HS$, $f_0
  \in \HSaux$) we have the estimate
  $\normsqr[\HS^1]{(U^1)^{-1}(f,f_0)} = \normsqr[\HS^1] f \le
  \normsqr[\wt \HS^1]{(f,f_0)}$.  Moreover, we set $T\phi=\phi$, then
  $T\Gamma f=\Gamma f=\wt \Gamma (U^1f)$.
\end{proof}
The previous result allows us to consider convergence of vertex-edge
coupled \aBVPs component-wise, even if the limit problem is only
edge-coupled.  A typical example is the convergence of the Laplacian
on a thin $\eps$-neighbourhood of a metric graph, to a Laplacian on
the metric graph.  We discuss the general convergence scheme in the
next section.

%
\section{Convergence of abstract graph-like spaces}
\label{sec:conv-graph-like}
%

In this section we show how one can translate convergence of building
blocks into a global convergence, expressed via the coupling of
abstract graph-like spaces in \Sec{graph-like} and the concept of
quasi-unitary equivalence resp.\ quasi-isomorphy for \aBVPs acting in
different Hilbert spaces in \Sec{conv-bd2}.

Fix a graph $G=(V,E,\bd)$ and let $\Pi$ and $\wt \Pi$ be two
vertex-coupled \aBVPs arising from the building blocks $\Pi_v$ and
$\wt \Pi_v$.  As we have seen in \Sec{vx-ed-coupling}, the vertex
coupling comprises also the vertex-edge coupled and even some
edge-coupled cases.

We want to show the following: Assume that all building blocks $\Pi_v$
and $\wt \Pi_v$ are quasi-isomorphic then the coupled Neumann forms
$\qf h$ and $\wt{\qf h}$ of the vertex-coupled \aBVPs $\Pi$ and $\wt
\Pi$ are quasi-unitarily equivalent, as well as the coupled boundary
operators $\Gamma$ and $\wt \Gamma$ are quasi-isomorphic.  We will not
treat the full (natural) problem of the quasi-isomorphy of $\Pi$ and
$\wt \Pi$ here, as we will not show that the boundary identification
operators $I$, $I'$ are quasi-isomomorphic (this would mean to impose
additional assumptions).

One problem with the coupling is that the naively defined
identification operator acts as
\begin{equation*}
  \map{J^{1,\dec} := \bigoplus_{v \in V} J^1_v}
  {\HS^{1,\dec}=\bigoplus_{v \in V} \HS^1_v}
  {\wt \HS^{1,\dec}=\bigoplus_{v \in V} \wt \HS^1_v}
\end{equation*}
but it is a priori not true that $J^{1,\dec}(\HS^1)\subset \wt \HS^1$,
i.e., that $J^{1,\dec}$ respects the coupling condition along the
different vertex building blocks as in \Sec{vx-coupling}.  In order to
correct this, we need the following definition (for the existence of
such operators, see the propositions after our next theorem):
\begin{definition}
  \label{def:smoothing}
  Let $\Pi$ be a vertex-coupled \aBVP arising from the vertex building
  blocks $(\Pi_v)_{v \in V}$.  We say that $\Pi$ \emph{allows a
    smoothing operator} if there is a bounded operator $\map
  B{\HS^{1,\dec}}{\HS^{1,\dec}}$ such that $f-Bf \in \HS^1$ for all $f
  \in \HS^{1,\dec}$.
\end{definition}
A simpler version of the following result can also be found
in~\cite[Sec.~4.8]{post:12}:
\begin{theorem}
  \label{thm:vx-coupling.que}
  Let $G=(V,E,\bd)$ be a discrete graph and $(\Pi_v)_v$, $(\wt
  \Pi_v)_v$ two families of \aBVPs allowing a vertex coupling. Assume
  that $\Pi_v$ and $\wt \Pi_v$ are $\delta_v$-quasi-isomorphic for
  each $v \in V$.  Moreover, assume that $\delta:=\sup_{v \in
    V} \delta_v <\infty$ and that the vertex-coupled \aBVPs $\Pi$ and
  $\wt \Pi$ allow smoothing operators $B$ and $\wt B$ such that
  \begin{equation}
    \label{eq:smoothing-op-small}
    \norm[\wt \HS^{1,\dec}]{\wt B J^{1,\dec} f}\le \delta \norm[\HS^{1,\dec}] f
    \quadtext{and}
    \norm[\HS^{1,\dec}]{B J^{\prime 1,\dec} u}\le \delta \norm[\wt \HS^{1,\dec}] u
  \end{equation}
  for $f \in \HS^1$ and $u \in \wt\HS^1$.  Then $\qf h$ and $\wt {\qf
    h}$ are $\delta'$-quasi-unitarily equivalent; and $\Gamma$ and
  $\wt \Gamma$ are $\delta'$-close where $\delta'=\Err(\delta)$.
\end{theorem}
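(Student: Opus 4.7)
The plan is to lift the vertex-wise data into global identification operators by direct sums, using the smoothing operators to enforce the coupling condition. On the Hilbert space level I set $J:=\bigoplus_v J_v$ and $J':=\bigoplus_v J'_v$, acting directly on $\HS=\bigoplus_v \HS_v$ and $\wt\HS=\bigoplus_v \wt\HS_v$. For the form domains, the naive choice $J^{1,\dec}:=\bigoplus_v J^1_v$ maps $\HS^1\subset\HS^{1,\dec}$ into $\wt\HS^{1,\dec}$ but in general violates the coupling condition defining $\wt\HS^1$. I correct this by setting
\begin{equation*}
J^1 f := (\id - \wt B) J^{1,\dec} f, \qquad J^{\prime 1} u := (\id - B) J^{\prime 1,\dec} u,
\end{equation*}
which lie in $\wt\HS^1$ and $\HS^1$ respectively by \Def{smoothing}. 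For the boundary identifications $I$ and $I'$ entering \Def{bd-maps.close}, I form edge-wise direct sums $I:=\bigoplus_e I_e$, $I':=\bigoplus_e I'_e$, assuming a compatibility $I_e\pi_{v,e} = \wt\pi_{v,e} I_v$ between the vertex-wise identifications and the edge projections (naturally available in the model examples of \Sec{graph-like}).

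For the $\delta'$-closeness of the forms I expand
\begin{equation*}
\wt{\qf h}(J^1 f, u) - \qf h(f, J^{\prime 1} u)
= \bigl(\wt{\qf h}^\dec(J^{1,\dec} f, u) - \qf h^\dec(f, J^{\prime 1,\dec} u)\bigr)
- \wt{\qf h}(\wt B J^{1,\dec} f, u) + \qf h(f, B J^{\prime 1,\dec} u).
\end{equation*}
The bracketed main term decouples into $\sum_v \bigl(\wt{\qf h}_v(J^1_v f_v, u_v) - \qf h_v(f_v, J^{\prime 1}_v u_v)\bigr)$; the vertex $\delta_v$-closeness together with Cauchy--Schwarz over $v$ and $\delta_v \le \delta$ gives a bound $\delta\norm[\HS^1]f\norm[\wt\HS^1]u$, using that $\HS^1 \hookrightarrow \HS^{1,\dec}$ is isometric. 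The two correction terms are controlled by \eqref{eq:smoothing-op-small} combined with the $\HS^1$-continuity of $\qf h$ and $\wt{\qf h}$. The quasi-unitary equivalence of $J, J', J^1, J^{\prime 1}$ in the sense of \Def{id-ops-q-u-e} is verified analogously: the inner-product and $f - J'Jf$, $u - JJ'u$ estimates split vertex-wise and use the vertex hypothesis; the mixed estimate $\norm[\wt\HS]{J^1 f - Jf} = \norm[\wt\HS]{(J^{1,\dec} - J^\dec)f - \wt B J^{1,\dec} f}$ combines the vertex hypothesis with \eqref{eq:smoothing-op-small} and the continuous embedding $\wt\HS^{1,\dec}\hookrightarrow\wt\HS$.

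For the boundary-map closeness, I split
\begin{equation*}
I\Gamma f - \wt\Gamma J^1 f
= (I\Gamma f - \wt\Gamma J^{1,\dec} f) + \wt\Gamma\wt B J^{1,\dec} f.
\end{equation*}
Using the representation of $\Gamma$ via the embeddings $\iota_v$ (as in \Thm{vx-coupling}) together with the compatibility $I_e\pi_{v,e}=\wt\pi_{v,e}I_v$, the first summand reduces, component by component, to differences $(I_v\Gamma_v - \wt\Gamma_v J^1_v)f_v$ (post-composed with $\pi_{v,e}$ or $\wt\pi_{v,e}$), each bounded by $\delta_v\norm[\HS^1_v]{f_v}$; summing and using $\delta_v\le\delta$ yields a bound of order $\delta\norm[\HS^1]f$. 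The second summand is bounded by $\sup_v\norm{\wt\Gamma_v}\cdot\delta\norm[\HS^1]f$ via \eqref{eq:smoothing-op-small}. The dual estimate for $I'\wt\Gamma - \Gamma J^{\prime 1}$ is symmetric. Collecting constants yields $\delta'=\Err(\delta)$ depending on $\sup_v\norm{\Gamma_v}$, $\sup_v\norm{\wt\Gamma_v}$, and the embedding constants. The main obstacle I expect is bookkeeping rather than analytical depth: carefully tracking the factor $\tfrac12$ of \eqref{eq:ed-vx} when passing between edge and vertex indexing, and ensuring the edge-to-vertex compatibility of the boundary identifications is used consistently in concert with the embeddings $\iota_v$ and $\iota$.
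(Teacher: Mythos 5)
Most of your proposal coincides with the paper's proof: the operators $J=\bigoplus_v J_v$, $J'=\bigoplus_v J'_v$, the smoothing-corrected maps $J^1=(\id-\wt B)J^{1,\dec}$, $J^{\prime 1}=(\id-B)J^{\prime 1,\dec}$, and the splitting of the form estimate and the boundary estimate into a decoupled main term plus $\wt B$-, $B$-correction terms controlled by~\eqref{eq:smoothing-op-small} are exactly the paper's argument, and those parts are sound. The genuine gap is in the boundary identification operators. The conclusion that $\Gamma$ and $\wt\Gamma$ are $\delta'$-close requires you to \emph{construct} operators $\map I \HSaux {\wt\HSaux}$ and $\map{I'}{\wt\HSaux}\HSaux$ from the data the hypotheses actually provide, namely the vertex-wise $I_v$, $I'_v$ and the coupling maps $\pi_{v,e}$, $\wt\pi_{v,e}$. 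Instead you \emph{postulate} edge-wise operators $I_e$ satisfying the intertwining relation $I_e\pi_{v,e}=\wt\pi_{v,e}I_v$, justified only by their availability ``in the model examples.'' This is an extra hypothesis, not a construction: for a fixed edge $e$ with $\bd e=(v,w)$, a single $I_e$ compatible with both endpoints forces the two operators $\wt\pi_{v,e}I_v\pi_{v,e}^*$ and $\wt\pi_{w,e}I_w\pi_{w,e}^*$ (the endpoint identifications ``seen from the edge'') to agree on $\HSaux_e$. Nothing in the vertex-by-vertex $\delta_v$-quasi-isomorphy imposes any consistency between the identification operators at the two ends of a shared edge, so in the abstract setting your $I_e$ need not exist, and your proof of the boundary-closeness part does not close.

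The repair is the paper's definition: average the two endpoint contributions,
\begin{equation*}
  (I\phi)_e := \frac 12 \sum_{v=\bd_\pm e} \wt\pi_{v,e}\, I_v\, \pi_{v,e}^*\, \phi_e,
\end{equation*}
and analogously for $I'$. This is well defined from the given data with no compatibility assumption; the possible mismatch between the two endpoint terms is then absorbed into the estimate rather than assumed away. Concretely, by convexity of the squared norm and the reordering~\eqref{eq:ed-vx} one passes from the edge sum to a vertex sum, uses that $\wt\iota_v$ is isometric so that $\sum_{e\in E_v}\normsqr[\wt\HSaux_e]{\wt\pi_{v,e}(\,\cdot\,)}=\normsqr[\wt\HSaux_v]{\cdot}$, and then each vertex term is controlled by the hypothesis $\norm[\wt\HSaux_v]{(I_v\Gamma_v-\wt\Gamma_v J^1_v)f_v}\le\delta_v\norm[\HS^1_v]{f_v}$ together with the smoothing bound~\eqref{eq:smoothing-op-small} for the $\wt B$-part of $J^1$ (here one also uses $\Gamma_{v,e}f_v=\Gamma_{w,e}f_w$ for $f\in\HS^1$). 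This yields $\normsqr[\wt\HSaux]{(I\Gamma-\wt\Gamma J^1)f}\le\delta^2\bigl(1+\sup_v\normsqr{\wt\Gamma_v}\bigr)\normsqr[\HS^1]f$, and symmetrically for $I'\wt\Gamma-\Gamma J^{\prime 1}$; with this replacement the rest of your argument goes through as written.
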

\begin{proof}
  We define $\map{J^1}{\HS^1}{\wt \HS^1}$, $J^1 := (\id_{\wt
    \HS^1}-\wt B)J^{1,\dec}$.  From the smoothing property, we have
  $J^1f \in \wt \HS^1$ for any $f \in \HS^{1,\dec}$, hence $J^1$ maps
  into the right space.  Similarly, we define $J^{\prime 1} :=
  (\id_{\HS^1}-B)J^{\prime 1,\dec}$.  The identification operators on
  $\HS$ and $\wt \HS$ are given as $J:=\bigoplus_{v \in V} J_v$ and
  $J':=\bigoplus_{v \in V} J'_v$.  Then we have
  \begin{equation*}
    \normsqr[\wt \HS] {Jf-J^1f}
    \le 2\sum_{v \in V}\normsqr[\wt \HS_v]{J_vf_v - J^1_vf_v} 
    + 2\normsqr[\wt \HS^{1,\dec}] {\wt B J^{1,\dec} f}
    \le 4 \delta^2\normsqr[\HS^1] f
  \end{equation*}
  using our assumptions.  A similar property holds for $J'$ and
  $J^{\prime 1}$.  The other properties of \Def{id-ops-q-u-e} for $J$
  and $J'$ follow directly from the ones of $J_v$ and $J'_v$ (as
  in~\cite[Sec.~4.8]{post:12}).  For the $\delta$-closeness of $\qf h$
  and $\wt{\qf h}$ we have
  \begin{multline*}
    \bigabssqr{\wt{\qf h}(J^1 f, u) - \qf h(f, J^{\prime 1}u)}
    \le 3\Bigl(\sum_{v \in V} \bigabs{\wt{\qf h_v}(J^1_v f_v, u_v) 
                               - \qf h_v(f_v, J^{\prime 1}_vu_v)}\Bigr)^2\\
      + 3 \bigabssqr{\wt {\qf h}(\wt BJ^{1,\dec}f,u)}
      + 3 \bigabssqr{\qf h(f, BJ^{\prime 1,\dec}u)}
    \le 9\delta^2 \normsqr[\HS^1] f \normsqr[\wt \HS^1] u
  \end{multline*}
  using again~\eqref{eq:smoothing-op-small}.  For the boundary
  identification operators we set $\map I \HSaux {\wt \HSaux}$ where
  $(I\phi)_e =\frac 12\sum_{v = \bd_\pm e} \wt \pi_{v,e} I_v
  \pi_{v,e}^* \phi_e$
  and similarly for $I'$.  Then, we have the following estimates for
  the closeness of the boundary maps
    \allowdisplaybreaks
  \begin{align*}
    \normsqr[\wt \HSaux]{(I\Gamma - \wt \Gamma J^1)f}
    &= \sum_{e \in E}
    \normsqr[\wt \HSaux_e]{((I\Gamma - \wt \Gamma J^1)f)_e}\\
    &\le \sum_{e \in E} \sum_{v =\bd_\pm e}
    \frac 12 \bignormsqr[\wt \HSaux_e]
      {\wt \pi_{v,e}\bigl(I_v \Gamma_v f_v - (\wt \Gamma J^1f)_v \bigr)}\\
    &\le \frac 12\sum_{v \in V}
    \bignormsqr[\wt \HSaux_v^{\max}] 
               {\wt \iota_v(I_v \Gamma_v f_v - (\wt \Gamma J^1 f)_v)}\\
    &\le \sum_{v \in V}
    \bignormsqr[\wt \HSaux_v] 
               {(I_v \Gamma_v - \wt \Gamma J^1_v) f_v}
     + \sup_v \normsqr{\wt \Gamma_v} 
     \normsqr[\wt \HS^1]{\wt B J^{1,\dec} f}\\
    & 
    \le \delta^2\bigl(1+\sup_v \normsqr{\wt \Gamma_v}\bigr) \normsqr[\HS^1] f.
  \end{align*}
  by our assumptions.  Similarly, we show the related property for
  $I'\wt \Gamma-\Gamma J^{\prime 1}$.

  $\delta'=\delta\max\{3,\sup_v\norm{\Gamma_v}+1,\sup_v \norm{\wt
    \Gamma_v}+1\}$ will do the job.
\end{proof}

Let us now prove the existence of smoothing operators:
\begin{proposition}
  \label{prp:existence-smoothing-op}
  Assume that there are operators
  $\map{\chi_{e,v}}{\HSaux_e^{1/2}}{\HS^1_v}$ such that
  \begin{equation*}
    \Gamma_{v,e} \chi_{e,v} \phi_e = \phi_e, \qquad
    \Gamma_{v,e} \chi_{e',v} \phi_{e'} = 0, \qquad e,e' \in E_v, e\ne e',\;
    v \in V.
  \end{equation*}
  Assume in addition that $C^2=\sup_v \sum_{e \in E_v}
  \normsqr[\HSaux_e^{1/2} \to \HS^1_v]{\chi_{v,e}} < \infty$ then
  \begin{equation*}
    (Bf)_v :=
    \frac 12
    \sum_{e \in E_v} \chi_{e,v} (\Gamma_{v,e} f_v - \Gamma_{v_e,e} f_{v_e})
  \end{equation*}
  defines a smoothing operator.
\end{proposition}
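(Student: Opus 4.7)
The plan is to verify the two defining properties of a smoothing operator from \Def{smoothing}: first, the algebraic condition $f-Bf\in\HS^1$ for every $f\in\HS^{1,\dec}$; second, boundedness of $B$ on $\HS^{1,\dec}$.

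The coupling property is purely algebraic. Fix an edge $e\in E$ with $\bd e=(v,w)$. The hypotheses $\Gamma_{v,e}\chi_{e,v}=\mathrm{id}$ and $\Gamma_{v,e}\chi_{e',v}=0$ for $e'\neq e$ in $E_v$ kill every summand of $\Gamma_{v,e}(Bf)_v$ except $e'=e$, yielding
\begin{equation*}
\Gamma_{v,e}(Bf)_v=\tfrac12\bigl(\Gamma_{v,e}f_v-\Gamma_{w,e}f_w\bigr),
\end{equation*}
and consequently $\Gamma_{v,e}(f-Bf)_v=\tfrac12(\Gamma_{v,e}f_v+\Gamma_{w,e}f_w)$. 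The right-hand side is symmetric under $v\leftrightarrow w$, hence equals $\Gamma_{w,e}(f-Bf)_w$, which is exactly the coupling condition defining $\HS^1$ in the construction of \Sec{vx-coupling}.

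For boundedness, Cauchy--Schwarz on each vertex block combined with the hypothesis $\sum_{e\in E_v}\|\chi_{e,v}\|^2\le C^2$ gives
\begin{equation*}
\|(Bf)_v\|^2_{\HS^1_v}\le\tfrac{C^2}{4}\sum_{e\in E_v}\|\Gamma_{v,e}f_v-\Gamma_{v_e,e}f_{v_e}\|^2_{\HSaux_e^{1/2}}.
\end{equation*}
Summing over $v\in V$ and applying the vertex--edge reordering~\eqref{eq:ed-vx} (each edge is visited twice, once from each endpoint, with identical contribution), followed by $\|a-b\|^2\le 2\|a\|^2+2\|b\|^2$ and a second application of~\eqref{eq:ed-vx}, produces
\begin{equation*}
\|Bf\|^2_{\HS^{1,\dec}}\le C^2\sum_{v\in V}\sum_{e\in E_v}\|\Gamma_{v,e}f_v\|^2_{\HSaux_e^{1/2}}.
\end{equation*}
Using $\Gamma_{v,e}=\pi_{v,e}\Gamma_v$, the isometry of $\iota_v$ from \Defenum{vx-coupling}{def.vx-coupling.b} collapses the inner sum to $\|\Gamma_v f_v\|^2$, and the uniform bound $\sup_v\|\Gamma_v\|<\infty$ from \Defenum{vx-coupling}{def.vx-coupling.a} then yields $\|Bf\|^2_{\HS^{1,\dec}}\le C^2(\sup_v\|\Gamma_v\|)^2\,\|f\|^2_{\HS^{1,\dec}}$.

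The step I expect to require most care is the passage between the two boundary-space topologies in the last chain of estimates: the operator norm of $\chi_{e,v}$ is measured in the intrinsic $\HSaux_e^{1/2}$-topology, while the isometry of $\iota_v$ is an $\HSaux_e$-statement. The bridge is the standard fact from the boundary-pair framework that $\Gamma_{v,e}\colon\HS^1_v\to\HSaux_e^{1/2}$ is bounded in the intrinsic norm $\|\phi\|_{\HSaux_e^{1/2}}=\|\SDir\phi\|_{\HS^1_v}$, because $\SDir\Gamma_{v,e}$ is the bounded projection of $\HS^1_v$ onto the weak-solution subspace along $\ker\Gamma_{v,e}$ (cf.\ the decomposition $\HS^1=\HS^{1,\Dir}\dplus\LS^1(-1)$ from \Sec{bd2}). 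Once this identification is in place the calculation above concludes the proof; everything else is the bookkeeping via~\eqref{eq:ed-vx}.
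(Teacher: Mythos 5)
Your verification of the coupling property $f-Bf\in\HS^1$ is exactly the paper's argument: the paper's entire displayed proof consists of the observation that $\Gamma_{v,e}(f-Bf)_v=\frac12\sum_{w=\bd_\pm e}\Gamma_{w,e}f_w$ is independent of the endpoint, which is your symmetry computation. For the boundedness the paper only says it ``follows easily'', and it is precisely there that your proposal has a genuine gap.

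Your Cauchy--Schwarz step and the two applications of~\eqref{eq:ed-vx} are fine and give
\begin{equation*}
  \normsqr[\HS^{1,\dec}]{Bf}
  \le C^2 \sum_{v\in V}\sum_{e\in E_v}\normsqr[\HSaux_e^{1/2}]{\Gamma_{v,e}f_v},
\end{equation*}
but the right-hand side cannot be collapsed as you claim. The isometry of $\iota_v$ from \Defenum{vx-coupling}{def.vx-coupling.b} is a statement about the $\HSaux_e$-norms, namely $\sum_{e\in E_v}\normsqr[\HSaux_e]{\pi_{v,e}\phi_v}=\normsqr[\HSaux_v]{\phi_v}$; it says nothing about the intrinsic $\HSaux_e^{1/2}$-norms, which are in general \emph{strictly stronger} (in \Ex{vx-coupling} this is the gap between $\Lsqr{Y_e}$ and $\Sob[1/2]{Y_e}$: there is no bound of the $\Sob[1/2]$-norm by the $\Lsqr{}$-norm). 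Your proposed bridge does not repair this. It is true, edge by edge, that $\SDir_{v,e}\Gamma_{v,e}$ is the orthogonal projection onto $(\ker\Gamma_{v,e})^{\perp}$ in $\HS^1_v$ (at $z=-1$ the decomposition $\HS^1_v=\ker\Gamma_{v,e}\dplus(\ker\Gamma_{v,e})^{\perp}$ is indeed orthogonal), whence $\norm[\HSaux_e^{1/2}]{\Gamma_{v,e}f_v}\le\norm[\HS^1_v]{f_v}$ for each fixed $e$. But summing these per-edge bounds over $e\in E_v$ yields only the factor $\deg v$, and the paper explicitly allows unbounded degree; the subspaces $(\ker\Gamma_{v,e})^{\perp}$, $e\in E_v$, are \emph{not} mutually orthogonal, so this factor cannot be removed from the stated hypotheses alone. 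What is actually needed (and what the paper's ``follows easily'' silently presupposes) is a uniform vector-valued trace estimate $\sum_{e\in E_v}\normsqr[\HSaux_e^{1/2}]{\Gamma_{v,e}f_v}\le K\normsqr[\HS^1_v]{f_v}$ in the intrinsic norms; in the manifold example this holds when the boundary components $Y_e$ do not touch in $\bd X_v$ (compare the remark following the proposition), but it follows neither from the isometry of $\iota_v$ nor from $\sup_v\norm{\Gamma_v}<\infty$, both of which control only $\HSaux_e$-quantities. For the same reason your final bound $C^2(\sup_v\norm{\Gamma_v})^2\normsqr[\HS^{1,\dec}]{f}$ mixes the two topologies: $\norm{\Gamma_v}$ is an $\HS^1_v\to\HSaux_v$ bound and measures the wrong norm.
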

\begin{proof}
  We have to show that $f - Bf \in \HS^1$ whenever $f \in
  \HS^{1,\dec}$, but this follows immediately from the fact that
  $\Gamma_{v,e}(f-Bf) = \frac 12\sum_{w =\bd_\pm e} \Gamma_{w,e} f_w$
  is independent of $v=\bd_\pm e$.  The boundedness of $\map B
  {\HS^{1,\dec}}{\HS^{1,\dec}}$ follows easily.
\end{proof}
One can e.g.~choose $\chi_{e,v}=\SDir_v \pi_{v,e}^*$ under suitable
assumptions on the maps $\pi_{v,e}$ (e.g., $\pi_{v,e}^*(\HSaux_e^
{1/2})\subset \HSaux_v^{1/2}$), where $\SDir_v$ is the Dirichlet
solution operator of $\Pi_v$.  The necessary assumptions are typically
fulfilled in our graph-like manifold example; in particular, if the
boundary components $Y_e$, $e \in E_v$, do not touch in $\bd X_v$, see
\Ex{vx-coupling}.

Finally, we show the norm bound~\eqref{eq:smoothing-op-small} of the
smoothing operators under a slightly stronger assumption than the
$\delta$-closeness of $\Gamma$ and $\wt\Gamma$ (see
\Def{bd-maps.close}):
\begin{proposition}
  Assume that a smoothing operator $\map {\wt B}{\wt \HS^{1,\dec}}{\wt
    \HS^{1,\dec}}$ as in \Prp{existence-smoothing-op} exists and that
  there is $\delta>0$ such that
  \begin{equation*}
    \sum_{e \in E_v} \bignormsqr[\wt \HSaux_e^{1/2}]
          {\wt \pi_{v,e}(\wt \Gamma_v J_v^1 - I_v \Gamma_v)f_v}
     \le \delta^2 \normsqr[\HS^1_v]{f_v}
  \end{equation*}
  holds for all $v \in V$ and $f_v \in \HS^1_v$.  Then $\norm[\wt
  \HS^{1,\dec}] {\wt B J^{1,\dec} f} \le \delta C \norm[\HS^1] f$ holds
  for all $f \in \HS^1$.
\end{proposition}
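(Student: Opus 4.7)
The plan is to combine the explicit formula for $\wt B$ from \Prp{existence-smoothing-op} with a Cauchy--Schwarz step producing the constant $C$, and then to exploit the coupling condition $\Gamma_{v,e} f_v = \Gamma_{v_e,e} f_{v_e}$ (automatic since $f \in \HS^1$) to reduce the resulting edge-differences to the expression appearing in the stated hypothesis.

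First, I would substitute the definition of $\wt B$ and write
$$(\wt B J^{1,\dec} f)_v = \frac 12 \sum_{e \in E_v} \wt\chi_{e,v}\bigl(\wt\Gamma_{v,e} J^1_v f_v - \wt\Gamma_{v_e,e} J^1_{v_e} f_{v_e}\bigr).$$
Applying discrete Cauchy--Schwarz in the sum over $e \in E_v$, together with the operator-norm bound on $\wt \chi_{e,v}$, yields
$$\bignormsqr[\wt\HS^1_v]{(\wt B J^{1,\dec} f)_v} \le \frac{C^2}{4} \sum_{e \in E_v}\bignormsqr[\wt\HSaux_e^{1/2}]{\wt\Gamma_{v,e} J^1_v f_v - \wt\Gamma_{v_e,e} J^1_{v_e} f_{v_e}},$$
where $C^2 = \sup_v \sum_{e \in E_v} \normsqr{\wt\chi_{e,v}}$ is the constant associated with $\wt B$ from \Prp{existence-smoothing-op}.

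Second, I would rewrite each edge difference by inserting $\pm \wt\pi_{v,e} I_v \Gamma_v f_v$ and $\pm\wt\pi_{v_e,e} I_{v_e}\Gamma_{v_e} f_{v_e}$. Since $f \in \HS^1$ satisfies $\pi_{v,e}\Gamma_v f_v = \pi_{v_e,e}\Gamma_{v_e} f_{v_e}$, and the identifications $I_v$ are compatible with the edge projections (so that the edge-component $\wt\pi_{v,e} I_v \pi_{v,e}^*$ depends only on $e$, not on the endpoint chosen), the two \emph{middle} contributions cancel. This leaves
$$\wt\Gamma_{v,e} J^1_v f_v - \wt\Gamma_{v_e,e} J^1_{v_e} f_{v_e} = \wt\pi_{v,e}(\wt\Gamma_v J^1_v - I_v\Gamma_v) f_v - \wt\pi_{v_e,e}(\wt\Gamma_{v_e} J^1_{v_e} - I_{v_e}\Gamma_{v_e}) f_{v_e}.$$
Using $\|a-b\|^2 \le 2\|a\|^2 + 2\|b\|^2$, summing over $v \in V$, reordering the double sum by~\eqref{eq:ed-vx} so that each endpoint contribution is counted twice, and then invoking the hypothesis vertex by vertex, I would obtain
$$\sum_{v \in V}\sum_{e \in E_v}\bignormsqr[\wt\HSaux_e^{1/2}]{\wt\Gamma_{v,e} J^1_v f_v - \wt\Gamma_{v_e,e} J^1_{v_e} f_{v_e}} \le 4 \sum_{v \in V} \delta^2 \normsqr[\HS^1_v]{f_v} = 4\delta^2\normsqr[\HS^1] f.$$
Combining with the Cauchy--Schwarz bound yields $\normsqr[\wt\HS^{1,\dec}]{\wt B J^{1,\dec} f} \le C^2\delta^2\normsqr[\HS^1] f$, which is the claim after taking square roots.

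The main subtle point will be the middle-term cancellation: it rests on an edgewise compatibility of the vertex identifications $I_v$ which is natural in the intended applications (for instance in the graph-like manifold setting, where $I_v$ respects the decomposition of $\bd X_v$ into its edge pieces $Y_{e,v}$) but which is left implicit in the statement. Once this is accepted, what remains is a routine combination of discrete Cauchy--Schwarz, the reordering~\eqref{eq:ed-vx}, and a direct application of the stated edgewise hypothesis.
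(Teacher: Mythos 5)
Your proof is correct and follows essentially the same route as the paper's: expand $\wt B J^{1,\dec} f$ via the explicit formula, apply Cauchy--Schwarz with the constant $C$, insert the middle terms $\wt \pi_{v,e} I_v \Gamma_v f_v$ which cancel across each edge by the coupling condition, reorder via~\eqref{eq:ed-vx}, and invoke the edgewise hypothesis --- the paper merely performs the $\pm$-insertion \emph{inside} the norm before its Cauchy--Schwarz step, whereas you do Cauchy--Schwarz first, which changes nothing of substance. Your closing remark is also well taken: the paper justifies the cancellation only by $\Gamma_{v,e} f_v = \Gamma_{v_e,e} f_{v_e}$, so its proof relies on exactly the same implicit edgewise compatibility of the $I_v$ (that $\wt \pi_{v,e} I_v$ factors through $\pi_{v,e}$ in an endpoint-independent way) that you make explicit.
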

\begin{proof}
  We have
  \begin{multline*}
    \normsqr[\wt \HS^{1,\dec}] {\wt B J^{1,\dec} f}
    = \sum_{v \in V}\Bignormsqr[\wt \HS^1_v]
      {\frac 12 \sum_{e \in E_v}
          \wt \chi_{e,v}
          \bigl( 
             \wt \Gamma_{v,e} J^1_v f_v - \wt \Gamma_{v_e,e} J^1_{v_e} f_{v_e} 
          \bigr)
      }\\
    = \frac 14 \!\sum_{v \in V}\Bignormsqr[\wt \HS^1_v]
      {\sum_{e \in E_v}
        \wt \chi_{e,v}\Bigl(
           \bigl(\wt \Gamma_{v,e} J^1_v \!-\! 
                      \wt \pi_{v,e} I_v \Gamma_{v,e}\bigr) f_v
         + 
           \bigl( \wt \pi_{v_e,e} I_{v_e} \Gamma_{v_e,e}
               - \wt \Gamma_{v_e,e} J^1_ {v_e}\bigr) f_{v_e} 
        \Bigr)
      }\\
      \le  C^2 \sum_{v \in V} \sum_{e \in E_v}
        \bignormsqr[\wt \HSaux^{1/2}_e] 
            {\wt \pi_{v,e}
              \bigl(\wt \Gamma_v J^1_v - I_v \Gamma_{v,e}\bigr) f_v}
      \le C^2 \delta^2 \normsqr[\HS^1] f
  \end{multline*}
  where we used that $\Gamma_{v,e} f_v=\Gamma_{v_e,e}f_{v_e}$ for the
  second equality.
\end{proof}

A careful observer might know that the following quote is not a rude
reminder of the discomfort of aging, but just a quote from Pavel's web
page \dots
\paragraph{Epilogue.}
\czQuote{Hl\'\i dejte si ty vz\'acn\'e okam\v ziky, kdy v\'am to je\v
  st\v e mysl\'\i. Mohou b\'yt posledn\'\i \dots}

%
%

\begin{thebibliography}{EKK{\etalchar{+}}08}

\bibitem[vBM13]{von-below-mugnolo:13}
J.~von Below and D.~Mugnolo, \emph{The spectrum of the {H}ilbert space valued
  second derivative with general self-adjoint boundary conditions}, Linear
  Algebra Appl. \textbf{439} (2013), 1792--1814.

\bibitem[BK13]{berkolaiko-kuchment:13}
G.~Berkolaiko and P.~Kuchment, \emph{Introduction to quantum graphs},
  Mathematical Surveys and Monographs, vol. 186, AMS,
  Providence, RI, 2013.

\bibitem[BP16]{behrndt-post:pre16}
J.~Behrndt and O.~Post, \emph{Convergence of the {D}irichlet-to-{N}eumann
  operators on thin branched manifolds}, in preparation (2016).


\bibitem[EKK{\etalchar{+}}08]{ekkst:08}
P.~Exner, J.~P. Keating, P.~Kuchment, T.~Sunada, and A.~Teplyaev (eds.),
  \emph{Analysis on graphs and its applications}, Proc. Symp. Pure Math., vol.
  77, Providence, R.I., Amer. Math. Soc., 2008.

\bibitem[EP05]{exner-post:05}
P.~Exner and O.~Post, \emph{Convergence of spectra of graph-like thin
  manifolds}, Journal of Geometry and Physics \textbf{54} (2005), 77--115.

\bibitem[EP07]{exner-post:07}
\bysame, \emph{{Convergence of resonances on thin branched quantum wave
  guides}}, J. Math. Phys. \textbf{48} (2007), 092104 (43pp).

\bibitem[EP09]{exner-post:09}
\bysame, \emph{{Approximation of quantum graph vertex couplings by scaled
  {S}chr\"o\-din\-ger operators on thin branched manifolds}}, J. Phys. A
  \textbf{42} (2009), 415305 (22pp).

\bibitem[EP13]{exner-post:13}
\bysame, \emph{A general approximation of quantum graph vertex
  couplings by scaled {S}chr\"odinger operators on thin branched manifolds},
  Comm. Math. Phys. \textbf{322} (2013), 207--227.

\bibitem[HdSS12]{hdss:12} S.~Hassi, H.~S.~V. de~Snoo and
  F.~H. Szafraniec (eds.), \emph{Operator methods for boundary value
    problems}, London Mathematical Society Lecture Note Series,
  vol. 404, Cambridge University Press, Cambridge, 2012.

\bibitem[KuZ01]{kuchment-zeng:01}
P.~Kuchment and H.~Zeng, \emph{Convergence of spectra of mesoscopic systems
  collapsing onto a graph}, J. Math. Anal. Appl. \textbf{258} (2001), 671--700.

\bibitem[MV06]{molchanov-vainberg:06}
S.~Molchanov and B.~Vainberg, \emph{Transition from a network of thin fibers to
  the quantum graph: an explicitly solvable model}, Quantum graphs and their
  applications, Contemp. Math., vol. 415, Amer. Math. Soc., Providence, RI,
  2006, pp.~227--239.

\bibitem[Pa06]{pankrashkin:06}
K.~Pankrashkin, \emph{Spectra of {S}chr\"odinger operators on equilateral
  quantum graphs}, Lett. Math. Phys. \textbf{77} (2006), 139--154.

\bibitem[P08]{post.in:08}
O.~Post, \emph{Equilateral quantum graphs and boundary triples},
  in~\cite{ekkst:08} (2008), 469--490.

\bibitem[P09]{post:09c}
\bysame, \emph{{First order approach and index theorems for discrete and metric
  graphs}}, Ann. Henri Poincar\'e \textbf{10} (2009), 823--866.

\bibitem[P12]{post:12}
\bysame, \emph{Spectral analysis on graph-like spaces}, Lecture Notes in
  Mathematics, vol. 2039, Springer, Heidelberg, 2012.

\bibitem[P16]{post:15}
\bysame, \emph{Boundary pairs associated with quadratic forms}, Math. Nachr.
  (to appear), 1--48.

\bibitem[Sh00]{shirai:00}
T.~Shirai, \emph{The spectrum of infinite regular line graphs}, Trans. Amer.
  Math. Soc. \textbf{352} (2000), 115--132.
\end{thebibliography}
\newcommand{\etalchar}[1]{$^{#1}$}
\providecommand{\bysame}{\leavevmode\hbox to3em{\hrulefill}\thinspace}

\end{document}